\theoremstyle{plain}
\newtheorem{theorem}{Theorem}[section]
\newtheorem{lemma}[theorem]{Lemma}
\newtheorem{proposition}[theorem]{Proposition}
\newtheorem{corollary}[theorem]{Corollary}
\theoremstyle{definition}
\newtheorem{definition}[theorem]{Definition}
\theoremstyle{remark}
\newtheorem*{remark}{Remark}
\newcommand{\bd}{\partial}
\newcommand{\eps}{\varepsilon}
\newcommand{\psh}{\mathcal{PSH}}
\newcommand{\usc}{\mathcal{USC}}
\newcommand{\lsc}{\mathcal{LSC}}
\newcommand{\suchthat}{\mathrel{;}}
\DeclareMathOperator\supp{supp}
\DeclareMathOperator{\dist}{dist}
\DeclareMathOperator{\ddc}{dd^c}
\DeclareMathOperator{\hull}{hull}
\DeclareMathOperator{\relcomp}{\subset\!\subset}
\DeclareMathOperator{\lepsh}{\le_\mathrm{psh}}
\DeclareMathOperator{\lepshc}{\le_\mathrm{c}}
\DeclareMathOperator{\lepshm}{\le_*}
\DeclareMathOperator{\leF}{\le_{\mathcal{F}}}
\newcommand{\ext}{\mathrm{d}}
\newcommand{\plsh}{\mathcal{PSH}}
\newcommand{\upsc}{\mathcal{USC}}
\newcommand{\Cn}{\mathbb{C}^{n}}
\newcommand{\R}{\mathbb{R}}
\newcommand{\E}{\mathcal{E}}
\newcommand{\M}{\mathcal{M}}
\numberwithin{equation}{section}
\title[Variations on a theorem by Edwards]{Variations on a theorem by Edwards}
\author{Mårten Nilsson, Frank Wikström}
\address{Center for Mathematical Sciences\\
  Lund University\\
  Box 118, SE-221 00 Lund, Sweden}
\email{marten.nilsson@math.lth.se}
\email{frank.wikstrom@math.lth.se}
\subjclass[2010]{Primary 32U05; Secondary 32U15, 31C10, 31C05}
\begin{document}

\maketitle
\begin{abstract}
    We discuss two variations of Edwards' duality theorem. More precisely, we
    prove one version of the theorem for cones not necessarily containing all
    constant functions. In particular, we allow the functions in the cone to
    have a non-empty common zero set. In the second variation, we replace
    suprema of point evaluations and infima over Jensen measures by suprema of
    other continuous functionals and infima over a set measures defined through
    a natural order relation induced by the cone.

    As applications, we give some results on propagation of discontinuities
    for Perron--Bremermann envelopes in hyperconvex domains as well as a characterization of minimal
    elements in the order relation mentioned above.
\end{abstract}

\section{Introduction}

Let $X$ be a metric space, and let $\mathcal{F}$ be a cone of upper
semicontinuous functions on~$X$. We say that a compactly supported, positive
measure $\mu$ is a \textit{Jensen measure} for $\mathcal{F}$ with barycenter $x
\in X$, denoted $\mu \in J_x^{\mathcal{F}}$, if
\[
u(x)\leq \int u \,d\mu, \quad\text{for all } u \in \mathcal{F}.
\]
In many situations, it is possible to retrieve interesting information about~$X$
and~$\mathcal F$ from the sets~$J_x^{\mathcal{F}}$. For example, the notions of
hyperconvexity and B-regularity in pluripotential theory may be characterized in
terms of Jensen measures of cones of plurisubharmonic functions~\cite{Wikstrom}.
Another application of Jensen measures to pluripotential theory is in the study
of pointwise suprema over elements in~$\mathcal{F}$~\cite{cole,
nilssonwik,nilsson, poletsky, Wikstrom}. A general result is the following
theorem due to Edwards~\cite{edwards}:

\begin{theorem}[Edwards' theorem]
    Suppose that~$X$ is compact, that~$\mathcal{F}$ contains all constant
    functions, and that $g:X \to (-\infty, \infty]$ is lower
    semicontinuous. Then
    \[
        \sup\big\{u(x) \suchthat u\in \mathcal{F}, u \leq g\big\} =
        \inf\big\{\int g \, d\mu \suchthat \mu \in J_x^{\mathcal{F}}\big\}.
    \]
\end{theorem}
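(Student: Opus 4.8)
The plan is to prove the two inequalities separately. The direction $\leq$ is the easy one: if $u \in \mathcal{F}$ with $u \leq g$, then for any $\mu \in J_x^{\mathcal{F}}$ we have $u(x) \leq \int u\,d\mu \leq \int g\,d\mu$, and taking suprema on the left and infima on the right gives the desired inequality. Note this direction uses neither compactness nor lower semicontinuity of $g$ nor the presence of constants.

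For the reverse inequality $\geq$, write $S$ for the left-hand supremum (the value of the Perron--Bremermann envelope at $x$). The strategy is a Hahn--Banach / minimax argument carried out on the space $C(X)$ of continuous real-valued functions, with $X$ compact so that $C(X)^* = M(X)$ is a space of signed Radon measures. First I would reduce to the case where $g$ is continuous: since $g$ is lower semicontinuous on a compact metric space, it is an increasing pointwise limit of continuous functions $g_j$, and one checks that both sides of the identity are continuous from below along this approximation (for the right-hand side, monotone convergence handles $\int g_j\,d\mu \nearrow \int g\,d\mu$ for each fixed $\mu$; for the left-hand side, it suffices that $\sup_j S_j = S$, which follows since any competitor $u \leq g$ need not be dominated by a single $g_j$ — so in fact this reduction requires a small argument, possibly instead approximating $g$ from below by continuous functions only where needed, or arguing the general case directly). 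Granting $g \in C(X)$, consider in $C(X) \times \mathbb{R}$ the convex set $A = \{(u, t) : u \in \mathcal{F}, t \leq u(x)\}$ — using that $\mathcal{F}$ is a cone, hence convex — and the point $(g, S')$ for $S' > S$. Since $S' > \sup\{u(x) : u \in \mathcal{F}, u \leq g\}$, the point $(g,S')$ lies outside the closure of $\{(u,t) \in A : u \leq g\}$; a separating hyperplane gives a functional, i.e. a measure $\nu$ on $X$ and a constant, separating them. The cone property of $\mathcal{F}$ forces the linear functional to be $\geq 0$ on $\mathcal{F}$ up to the evaluation at $x$, and the normalization (using that constants lie in $\mathcal{F}$, so that $\pm 1 \in \mathcal{F}$ and hence $\nu(X) = 1$ and $\nu \geq 0$) yields that $\nu$ is a probability measure with $u(x) \leq \int u\,d\nu$ for all $u \in \mathcal{F}$, i.e. $\nu \in J_x^{\mathcal{F}}$, while the separation at the $g$-coordinate gives $\int g\,d\nu \leq S'$. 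Letting $S' \searrow S$ finishes the proof.

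The main obstacle I anticipate is twofold. First, the lower-semicontinuity reduction is more delicate than it looks: the envelope $S_j$ associated to $g_j$ can be strictly smaller than $S$ for every $j$ if the optimal $u$ is not dominated by any single $g_j$, so one cannot simply say $S_j \nearrow S$ without further thought; the clean fix is to run the separation argument directly with lower semicontinuous $g$ by working in a suitable ordered vector space, or to exploit that on a compact metric space lsc functions are still well-approximated in the relevant integral sense. Second, extracting the \emph{positivity} and \emph{normalization} of $\nu$ from the separating hyperplane is exactly where the hypothesis ``$\mathcal{F}$ contains all constants'' is used, and this must be done carefully: the cone structure yields that the functional is nonnegative on $\mathcal{F}$ (otherwise scaling an element of $\mathcal{F}$ by large positive constants would violate separation), and then $1 \in \mathcal{F}$ together with $-1$ being a limit... — actually $-1 \in \mathcal{F}$ as well since all constants are included — pins down $\nu(X) = 1$. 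Everything else is routine functional analysis once the geometric picture is set up correctly.
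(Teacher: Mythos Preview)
Your outline is in the right spirit---Hahn--Banach followed by Riesz representation---and this is indeed how the paper proceeds (in the proof of Theorem~\ref{edwards}, which specialises to the classical statement by taking $H=C(X)$). But the paper's execution differs from yours in two places, and in each of them your version has a gap.

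First, the paper does not set up a geometric separation in $C(X)\times\mathbb{R}$. It observes instead that $h\mapsto -S_x(h)$ is \emph{sublinear} on $C(X)$, defines the linear map $cg\mapsto cS_x(g)$ on the span of $g$, and extends by the analytic Hahn--Banach theorem to a linear functional $A'$ on all of $C(X)$ with $A'\ge S_x$. Positivity of $A'$ is then immediate: if $\varphi\ge 0$ then $0\in\mathcal{F}$ gives $S_x(\varphi)\ge 0$, hence $A'(\varphi)\ge 0$. Your separation argument, by contrast, produces a functional $\nu$ satisfying only $\int u\,d\nu\ge u(x)$ for $u\in\mathcal{F}$; plugging in $u=\pm 1$ pins down $\nu(X)=1$, but it does \emph{not} force $\nu\ge 0$, so the clause ``$\pm 1\in\mathcal{F}$ and hence $\nu(X)=1$ and $\nu\ge 0$'' is exactly where the argument breaks. (There is also a slip earlier: you propose to separate $(g,S')$ from the subset $\{(u,t)\in A: u\le g\}$ rather than from a full convex cone, which yields nothing.) The clean repair is either to separate from the larger convex set $\{(f,t): t\le S_x(f)\}$, so that the resulting functional dominates $S_x$ and is therefore positive, or simply to use the sublinear-extension form as the paper does.

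Second, your worry about the reduction from lower semicontinuous $g$ to continuous $g_j\nearrow g$ is justified: $S_x(g_j)\nearrow S_x(g)$ can genuinely fail. The paper does not attempt this reduction. Having constructed for each $h_i$ an optimal measure $\mu_{h_i}$ with $\int h_i\,d\mu_{h_i}=S_x(h_i)$, it instead invokes Banach--Alaoglu to pass to a weak-$*$ limit $\mu_{h_\infty}$ and runs the chain
\[
  I_x(h_\infty)\ge S_x(h_\infty)\ge \lim_n S_x(h_{i_n})=\lim_n\int h_{i_n}\,d\mu_{h_{i_n}}\ge \int h_k\,d\mu_{h_\infty},
\]
then lets $k\to\infty$ with monotone convergence. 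This weak-$*$ compactness of the optimal measures is the missing ingredient in your passage to general lower semicontinuous $g$.
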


In this note, we prove two variations of Edwards' theorem. In Section~2, we
relax the conditions on~$\mathcal{F}$ in a manner that allows us to apply the
theorem to cones of negative plurisubharmonic functions in hyperconvex domains,
paralleling the case when the domain is B-regular~\cite{nilsson, Wikstrom}. This
is done by modifying the orginal proof of Edwards', replacing the space $C(X)$
by  a subspace $H \subset C(X)$ satisfying certain criteria. We then use this
result to establish continuity of certain Perron--Bremermann envelopes.

Section~3 and Section~4 provide the setting for the second variation of Edwards'
theorem, where the Jensen measures are replaced by a set of measures given by an
ordering induced by~$\mathcal{F}$. Here, we are motivated by an ordering of
positive measures induced by (negative) plurisubharmonic functions, introduced
by Bengtson in~\cite{Bengtson}. More precisely,

\begin{definition}
    Let $\Omega \subset  \Cn$ be a domain, and let $\mu$ and $\nu$ be
    positive Radon measures supported on $\bar\Omega$. We say that
    $\mu \lepsh \nu$ if
    \[
        \int u\,\ext\mu \ge \int u\,\ext\nu
    \]
    for all $u \in \E_0(\Omega)$. Recall that
    \[
        \E_0(\Omega) = \{ u \in \plsh(\Omega) : u < 0,
        \lim_{\zeta \to p \in \partial\Omega} u(\zeta) = 0,
        \int_\Omega (\ddc u)^n < \infty \},
    \]
    is the set of ``plurisubharmonic test functions'' introduced by
    Cegrell~\cite{Cegrell}.
\end{definition}

Note that if~$\mu$ is a Jensen measure with barycenter~$z$, then $\mu \lepsh
\delta_z$ (where~$\delta_z$ denotes a point mass at~$z$). The fact that
$\frac12\mu \lepsh \mu$ in Bengtson's ordering gives rise to a number of
normalization issues, for example in the definition of maximal and/or minimal
measures in~\cite{Bengtson}. Inspired by these observations, we introduce a few
(slightly) different notions of orderings induced by plurisubharmonic functions
in Section~3, in which comparable measures have the same total mass. In
Section~4, we continue our study, employing a result of Duval and
Sibony~\cite{Duval:95} which provides sufficient criteria for two comparable
measures to coincide.

Finally, in Section~5, we prove a version of Edwards' theorem adapted to this
setting by letting a fixed measure~$\mu$ play the rôle of~$\delta_z$, and
letting measures plurisubharmonically larger than~$\mu$ play the part of the
Jensen measures. We then apply the result to characterize the minimal elements
in the different orderings.

\section{Duality on hyperconvex domains}

In this section, we study envelopes over cones of negative plurisubharmonic
functions on the closure of a bounded hyperconvex domain~$\Omega$. Specifically,
we consider the two cones
\begin{align*}
    &\psh^0(\bar \Omega) :=
        \{u \in \usc(\bar \Omega)\cap\psh^-(\Omega) \suchthat u\big|_{\bd \Omega} = 0\},   \\
    &\E_0(\Omega) \cap C(\bar \Omega).
\end{align*}
By an approximation theorem of Cegrell~\cite{Cegrell2}, the Jensen measures of
these two cones coincide, using the monotone convergence theorem. Denote this
set of measures by~$J^-_z$. We first aim to prove the following:

\begin{theorem}\label{thm:edwhypcon2}
    Let $g: \bar \Omega \to \R$ be a lower bounded function such
    that $g_* \leq g$, with equality outside a pluripolar set $P$, and suppose that
$g_*(\zeta)=0$ for all $\zeta \in \bd \Omega$. Then
    \begin{align*}
        \sup\big\{u(x) \suchthat u\in \E_0(\Omega) \cap C(\bar \Omega), u \leq g\big\}
        &= \sup\big\{u(x) \suchthat u\in \psh^0(\Omega) , u \leq g\big\} \\
        &= \inf\big\{\int g \, d\mu \suchthat \mu \in J_z^-\}
    \end{align*}
    holds outside the pluripolar hull of~$P$. In particular, if $g$ is upper
    semicontinuous and~$P$ is a closed, complete pluripolar set, then the
    envelopes are continuous outside~$P$.
\end{theorem}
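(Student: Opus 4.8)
The plan is to reduce this to the first variation of Edwards' theorem (the version proved in Section~2 with $C(X)$ replaced by a suitable subspace $H$), applied with $X = \bar\Omega$, the cone $\mathcal{F} = \E_0(\Omega) \cap C(\bar\Omega)$, and $H$ chosen so that $\plsh^0(\bar\Omega)$-functions can be approximated from below. Since $\Omega$ is bounded and hyperconvex, both cones admit the same Jensen measures $J_z^-$ by Cegrell's approximation theorem, so the middle and lower terms should match up directly once the abstract theorem applies; the inequality $\sup\{u(x) : u \in \E_0 \cap C, u \le g\} \le \sup\{u(x) : u \in \plsh^0, u \le g\}$ is immediate from the inclusion of cones, so the real content is the reverse chain.

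First I would handle the easy direction: any competitor $u$ in either sup-class satisfies $u \le g$, hence $u(x) \le \int u\,d\mu \le \int g\,d\mu$ for every $\mu \in J_z^-$, giving $\sup \le \inf$ for both cones at every point of $\bar\Omega$ (no exceptional set needed here). For the hard direction I would first treat the case where $g$ itself is lower semicontinuous (so $g_* = g$, $P = \emptyset$): here the abstract Edwards-type theorem of Section~2 gives $\sup\{u(x) : u \in \mathcal{F}, u \le g\} = \inf\{\int g\,d\mu : \mu \in J_z^-\}$ directly, provided the hypotheses on $H$ are met — this is where I would need $g$ lower bounded and $g|_{\bd\Omega} = 0$ to ensure the relevant functionals and the boundary normalization are compatible with $H$. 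Then for general $g$ with $g_* \le g$ and equality off a pluripolar set $P$: I would apply the lsc case to the regularization $g_*$, obtaining the duality for $g_*$ everywhere, and then argue that replacing $g_*$ by $g$ changes neither side outside the pluripolar hull $\widehat{P}$. On the sup side, a psh competitor $u \le g_*$ automatically satisfies $u \le g$ (since $g_* \le g$), so $\sup$ for $g$ dominates $\sup$ for $g_*$; conversely, if $u \le g$ is psh, then $u \le g$ fails to give $u \le g_*$ only on $P$, but by the standard fact that a plurisubharmonic function bounded above by $g_*$ off a pluripolar set extends (its usc regularization stays below $g_*$ off $\widehat P$), the two suprema agree off $\widehat P$. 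On the inf side, since $g = g_*$ off $P$ and $J_z^-$-measures put no mass on pluripolar sets (each such $\mu \lepsh \delta_z$ and the measures arising from $\E_0$ vanish on pluripolar sets — this needs a short justification using that $\E_0 \cap C(\bar\Omega)$ separates enough), we get $\int g\,d\mu = \int g_*\,d\mu$, so the infima coincide unconditionally.

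The main obstacle I anticipate is precisely the claim that $J_z^-$ puts no mass on pluripolar sets, which is what lets the inf-side integrals ignore $P$; one route is to note that for pluripolar $E$ there is $v \in \E_0(\Omega)$ with $v = -\infty$ on $E$, approximate $v$ by continuous functions in $\E_0 \cap C(\bar\Omega)$ decreasing to $v$ (Cegrell), and use the Jensen inequality plus monotone convergence to force $\mu(E) = 0$. The second delicate point is verifying that the subspace $H$ from Section~2 genuinely contains (or well-approximates) $g_*$ and the cone $\E_0(\Omega) \cap C(\bar\Omega)$ in the way the abstract theorem demands — in particular that the boundary condition $g_*|_{\bd\Omega} = 0$ is exactly the normalization making $0 \in \mathcal{F}$ play the role that constants play in classical Edwards. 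Finally, the ``in particular'' clause follows formally: if $g$ is usc and $P$ is closed and complete pluripolar, then $\widehat P = P$, the common value of the three envelopes equals a function that is both usc (as an envelope bounded by the usc $g$, hence its usc-regularization is psh and $\le g$ off $P$) and lsc (as an infimum of the continuous-in-$x$ functionals $\mu \mapsto \int g\,d\mu$, using lower semicontinuity of $g$... here one instead uses that the envelope equals the Perron--Bremermann envelope which is psh hence usc, combined with the inf-representation giving lsc off $P$), so it is continuous on $\bar\Omega \setminus P$.
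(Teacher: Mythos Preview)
Your overall strategy matches the paper's: reduce to the lower semicontinuous case via the abstract Edwards-type theorem (this is exactly the paper's Lemma~2.5), and then pass from $g_*$ to $g$ using that Jensen measures in $J_{z_0}^-$ put no mass on $P$ when $z_0$ lies outside the pluripolar hull. Your justification of $\mu(P)=0$ via a function $v\in\E_0$ with $v|_P=-\infty$ is also the paper's argument, though note this is \emph{not} unconditional: it requires $v(z_0)>-\infty$, i.e.\ $z_0\notin\widehat P$, so the infima coincide only off $\widehat P$, not everywhere as you wrote.

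The one genuine gap is your direct argument for $S_{z_0}(g)\le S_{z_0}(g_*)$ on the sup side. The ``standard fact'' you invoke---that a plurisubharmonic $u$ with $u\le g_*$ off a pluripolar set satisfies $u\le g_*$ (or has usc regularization $\le g_*$) off $\widehat P$---is not true as stated: take $P$ closed, $g_*=0$ on $\bar\Omega\setminus P$ and $g_*=-1$ on $P$ (this is lsc), and $u\equiv 0$. Then $u\le g_*$ off $P$ but $u>g_*$ on $P$, so $u$ is not a competitor for $S(g_*)$. The paper avoids this entirely by not comparing the suprema directly; instead it closes the loop via
\[
S_{z_0}(g)\le I_{z_0}(g)=I_{z_0}(g_*)=S_{z_0}(g_*)\le S_{z_0}(g),
\]
using only the trivial inequality $S(g_*)\le S(g)$ (from $g_*\le g$), the easy inequality $S(g)\le I(g)$, the inf-side equality $I(g)=I(g_*)$ from $\mu(P)=0$, and Lemma~2.5 for $g_*$. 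You already have each of these pieces; just assemble them this way and drop the faulty converse on the sup side.

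For the continuity clause, the paper's argument is simpler than your sketch: the $\E_0\cap C(\bar\Omega)$-envelope is lsc (supremum of continuous functions), while the $\psh^0$-envelope is usc (its usc regularization is plurisubharmonic, $\le g$ since $g$ is usc, vanishes on $\partial\Omega$, hence is itself a competitor). Their equality on the open set $\bar\Omega\setminus P$ forces continuity there.
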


Before embarking on a proof, we begin by establishing a version of Edwards'
theorem adapted to also work for cones~$\mathcal{F}$ where the common zero set
\[
    Z_{\mathcal{F}}:=\{x \in X \suchthat \forall u \in \mathcal{F}, u(x)=0\}
\]
is non-empty. Specifically, we will modify the aspects of the original proof
that rely on the following properties of $C(X)$:
\begin{itemize}
    \item For each element $u \in \usc(X)$, there exists a decreasing sequence $g_i \in C(X)$ such that $g_i \searrow u$. In particular, this holds if $u \in \mathcal{F}$.
    \item For each element $l \in \lsc(X)$, there exists an increasing sequence $g_i \in C(X)$ such that $g_i \nearrow l$.
\end{itemize}
Instead, this rôle will be played by a subspace $H \subset C(X)$, for which the set
\[
    Z_{H}=\{x \in X \suchthat \forall h \in H, h(x)=0\}
\]
is allowed to be non-empty as well. In order to do so, we make the following
definitions.

\begin{definition}
  Let $H \subset C(X)$ be a vector space, and let $E: C_c( X \setminus Z_{H})
  \to C(X)$ denote extension by zero. We say that $H$ is \textit{total
  outside its zeros} if for each $\varphi \in E(C_c(X \setminus Z_{H}))$,
  there exists a decreasing sequence $h_i \in H$ such that
  \(
    h_i \searrow \varphi.
  \)
\end{definition}

\begin{definition}
    Let $H \subset C(X)$ be a vector space. We say that $\mathcal{F}$ is
    $H$\textit{-admissible} if each element in $H$ has minorant in
    $\mathcal{F}$, and for each element $u \in \mathcal{F}$, there exists a
    decreasing sequence $h_i \in H$ such that $h_i \searrow u$.
\end{definition}

\begin{remark}
    Note that a cone of upper semicontinuous functions containing constants is
    $C(X)$-admissible.
\end{remark}

For brevity of notation, we define the following two operators on the
space of Borel functions on $X$:
\begin{align*}
    S_x(g) &=
    \begin{cases}
        \sup\{u(x) \suchthat u\in \mathcal{F}, u \leq g\}, & \hfill \text{if }\exists u\in \mathcal{F} \suchthat u \leq g, \\
        - \infty, &\hfill \text{otherwise,}
    \end{cases} \\
     I_x(g) &= \inf\Big\{\int g \, d\mu \suchthat \mu \in J_x^{\mathcal{F}}\Big\}.
\end{align*}
Adapting Edwards' original proof, we get the following version of the theorem.

\begin{theorem}[Edwards' theorem on $H$-admissible cones]\label{edwards}
    Let $X$ be compact, $H \subset C(X)$ be a subspace total outside its zeros,
    and suppose that $\mathcal{F}$ is $H$-admissible. Fix $x\in X$ and assume
    that there exists $C_x>0$ such that for each element $h \in H$, there exists
    $u\in \mathcal{F}$ with $u\leq h$ and
    \[
        u(x) \geq -C_x|h|_\infty.
    \]
    Then for each non-decreasing sequence $H \ni h_i \nearrow h_\infty$, we have $S_x(h_\infty)=I_x(h_\infty)$.
\end{theorem}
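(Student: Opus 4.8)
The strategy is to follow Edwards' original argument but carefully track where the hypotheses on constant functions were used, and replace those appeals by the weaker quantitative estimate $u(x) \geq -C_x |h|_\infty$ together with the "total outside its zeros" property of $H$. First I would establish the easy inequality $S_x(h_\infty) \leq I_x(h_\infty)$: if $u \in \mathcal F$ with $u \leq h_\infty$ and $\mu \in J_x^{\mathcal F}$, then $u(x) \leq \int u \, d\mu \leq \int h_\infty \, d\mu$, provided the last integral makes sense; since $h_\infty$ is a non-decreasing limit of continuous functions it is lower semicontinuous and lower bounded (each $h_i$ is bounded, and $\mathcal F$-admissibility gives a minorant), so $\int h_\infty \, d\mu \in (-\infty, \infty]$ and the bound is legitimate. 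Taking sup over $u$ and inf over $\mu$ gives the inequality.

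The substance is the reverse inequality $S_x(h_\infty) \geq I_x(h_\infty)$. The key reduction is to the case of a single $h \in H$, i.e.\ to show $S_x(h) \geq I_x(h)$; the passage to the monotone limit $h_\infty$ is handled at the end by a separate monotone-convergence argument. For a fixed $h \in H$, consider the convex subset $K = \{\varphi \in H : \varphi \leq h, \ S_y(\varphi) \leq 0 \text{ for all } y\}$ — actually the cleaner route, matching Edwards, is to work on the compact space $X$ with the vector space $H$ and the sublinear functional $p(\varphi) = S_x(\varphi)$ defined for $\varphi \in H$ (note $p$ is well-defined and finite-valued on $H$ precisely because of the hypothesis: $p(\varphi) \geq -C_x|\varphi|_\infty > -\infty$, and $p(\varphi) \leq |\varphi|_\infty < \infty$ since each $h_i \searrow u$ forces $u \leq h_i$, but more simply $u \leq \varphi \leq |\varphi|_\infty$). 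One checks $p$ is positively homogeneous and superadditive (so $-p$ is sublinear) on $H$; then the linear functional $t \mapsto t\, p(h)$ on $\mathbb R h$ is dominated appropriately and extends, via Hahn–Banach, to a linear functional $L$ on $H$ with $L \leq p$ pointwise...

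here the care is needed: $L$ need not be positive on all of $C(X)$, only on $H$, and $H$ may fail to contain constants, so one cannot immediately produce a probability measure. This is where "total outside its zeros" enters: $L$ restricted to $E(C_c(X \setminus Z_H))$ is positive (if $\varphi \geq 0$ lies in this set, then $-\varphi$ is a decreasing limit of elements of $H$, hmm — rather: $\varphi$ itself is such a limit, and $S_x(-\varphi) \leq 0$ gives $L(-\varphi) \leq 0$), hence by the Riesz representation theorem $L$ is represented on $C_c(X\setminus Z_H)$ by a positive Radon measure $\mu$; the mass estimate $L(\varphi) \leq p(\varphi) \leq |\varphi|_\infty$ bounds $\mu(X \setminus Z_H) \leq 1$, so $\mu$ extends (after possibly adding mass on $Z_H$, where every $u \in \mathcal F$ vanishes) to a measure that is a Jensen measure for $\mathcal F$ at $x$: for $u \in \mathcal F$, approximate $u$ from above by $h_i \in H$, use $L(h_i) \leq p(h_i) = S_x(h_i) \leq h_i(x)$ and $L(h_i) \to \int u\,d\mu$ wait — one needs $L(h_i) \geq u(x)$ in the limit, which follows since $L(h_i) \geq$ ... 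I would instead arrange the Hahn–Banach extension so that $L(h) = p(h) = S_x(h)$ and $L \geq$ the relevant lower bounds, then verify directly that the resulting $\mu \in J_x^{\mathcal F}$ and $\int h \, d\mu = L(h) = S_x(h)$, giving $I_x(h) \leq S_x(h)$.

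Finally, for the monotone limit: given $H \ni h_i \nearrow h_\infty$, the already-proven equality gives $S_x(h_i) = I_x(h_i)$ for each $i$; since $S_x$ is monotone, $S_x(h_\infty) \geq \sup_i S_x(h_i) = \sup_i I_x(h_i)$, and one shows $\sup_i I_x(h_i) \geq I_x(h_\infty)$ by a diagonal/compactness argument on Jensen measures: pick near-optimal $\mu_i \in J_x^{\mathcal F}$ for $h_i$, note they have uniformly bounded mass (bounded by the quantitative hypothesis applied suitably), extract a weak-$*$ limit $\mu$, check $\mu \in J_x^{\mathcal F}$, and use monotone convergence $\int h_i\, d\mu \nearrow \int h_\infty \, d\mu$ to conclude $I_x(h_\infty) \leq \int h_\infty\, d\mu \leq \lim_i I_x(h_i) + \text{(errors)} \leq S_x(h_\infty)$. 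Combined with the first inequality, $S_x(h_\infty) = I_x(h_\infty)$.

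The main obstacle I anticipate is the Hahn–Banach/Riesz step in the absence of constants: ensuring that the extended functional $L$, which is only controlled on the subspace $H$, still yields a \emph{genuine} Jensen measure for $\mathcal F$ (not merely a functional), and that its total mass is correctly normalized. The hypothesis "for each $h \in H$ there is $u \in \mathcal F$ with $u \leq h$ and $u(x) \geq -C_x|h|_\infty$" is exactly the surrogate for the normalization that constants would otherwise provide, and threading it through the mass estimate and the positivity-on-$C_c(X\setminus Z_H)$ check is the delicate part; everything else is a faithful transcription of Edwards' proof with $C(X)$ replaced by $H$.
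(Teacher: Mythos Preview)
Your overall architecture matches the paper's: first prove $S_x(h)=I_x(h)$ for a single $h\in H$ via a Hahn--Banach argument producing a Jensen measure that attains the infimum, then pass to monotone limits by weak-$*$ compactness of the measures involved. The monotone-limit step you sketch is exactly what the paper does.

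The gap is in the step you yourself flag as the ``main obstacle'', and your proposed resolution does not quite close it. You want to restrict $L$ to $E(C_c(X\setminus Z_H))$, observe positivity there, and invoke Riesz to get a positive measure $\mu$ on $X\setminus Z_H$. But $L$ is so far only defined on $H$, and $E(C_c(X\setminus Z_H))$ is not contained in $H$; even after extending $L$ to $\overline H$ by continuity (legitimate via Dini), you cannot recover $L(h)$ for general $h\in H$ from $\mu$, because $h\in H$ need not lie in $E(C_c(X\setminus Z_H))$ --- it vanishes on $Z_H$ but is typically not compactly supported in its complement. So the identity $L(h)=\int h\,d\mu$ that you need is not available from your construction. (A smaller slip: the Hahn--Banach inequality should read $L\ge S_x$ on $H$, i.e.\ $-L\le -S_x$, not $L\le p$.)

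The paper fixes this cleanly by inserting a \emph{second} Hahn--Banach extension: from $A'$ on $H$ (where $|A'(h)|\le C_x|h|_\infty$ follows from the quantitative hypothesis) to a bounded functional $A''$ on all of $C(X)$, which Riesz represents as a \emph{signed} measure $\mu_h=\mu_h^+-\mu_h^-$. The ``total outside its zeros'' hypothesis, combined with Dini and the bound $A'\ge S_x$ on $H$, then shows $A''$ is positive on $E(C_c(X\setminus Z_H))$, forcing $\supp\mu_h^-\subset Z_H$. Since every $g\in H$ vanishes on $Z_H$, one gets $\int g\,d\mu_h^+=\int g\,d\mu_h=A'(g)\ge S_x(g)$ for all $g\in H$, with equality at $g=h$; monotone convergence and $H$-admissibility then show $\mu_h^+\in J_x^{\mathcal F}$. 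This signed-measure detour, with the negative part confined to $Z_H$, is precisely the missing mechanism in your sketch.
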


\begin{proof}
    We first consider the case when $h_\infty=h \in H$. Since $h$ is bounded
    from below by an element $u\in\mathcal{F}$ such that $u(x)\geq
    -C_x|h|_\infty$ at $x\in X$, we have
    \[
        \infty > h(x) \geq S_x(h) \geq u(x) > -\infty,
    \]
    so $-S_x$ defines a sublinear map $H\to\mathbb{R}$. On the subspace
    of generated by $h$, we now define a linear map
    \[
        -A(ch)=-cS_x(h), \quad c \in \mathbb{R},
    \]
    which satisfies $-A\leq-S_x$ on
    the whole span of $h$. Using the Hahn--Banach theorem, we first extend $-A$
    to a linear map $-A'$ on $H$ such that
    \[
        -A' \leq -S_x \text{ on }H,
    \]
    and then extend $A'$ to a continuous linear functional $A''$ on $C(X)$ such
    that
    \[
        A''(\varphi) \leq C_x|\varphi|_\infty \text{ for all }\varphi \in C(X).
    \]
    Since $X$ is compact, the Riesz representation theorem implies that~$A''$
    may be represented by a signed measure $\mu_h = \mu_h^+ - \mu_h^-$. Using
    the fact that~$H$ is total outside its zeros, we now note that
    $A''|_{E(C_c(X \setminus Z_{H}))}$ is positive, which implies that~$\mu_h^-$
    is supported on the closed set of common zeros of $H$. For any $g \in H$,
    $\mu_h^+$ hence satisfies
    \[
        \int_X g\,  d\mu_h^+ = \int_X g\,  d\mu_h \geq S_x(g),
    \]
    with equality if $g=h$. Since $\mathcal{F}$ is $H$-admissible, the monotone
    convergence theorem implies that
    \[
        \int_X u\,  d\mu_h \geq u(x)
    \]
    for any $u \in \mathcal{F}$, and so $\mu_h^+$ is a Jensen measure. We
    conclude that $S_x(h)=I_x(h)$.

    For general~$h_\infty$, associate to each~$h_i$ a signed measure~$\mu_{h_i}$ as
    above, and note that
    \[
        C_x \geq \int_X d\mu_{h_i} \geq -C_x.
    \]
    By the Banach--Alaoglu theorem, we may thus extract a subsequence
    $\{\mu_{h_{i_n}}\}$ converging to a signed measure $\mu_{h_\infty}$ in the
    weak$^*$-topology. Fixing $k\in \mathbb{N}$, we have
    \begin{align*}
        I_x(h_\infty)
        &\geq S_x(h_\infty)
        \geq \lim_{n\to \infty} S_x(h_{i_n})
        = \lim_{n\to \infty} I_x(h_{i_n}) \\
        &=\lim_{n \to \infty}\int h_{i_n} \, d\mu_{h_{i_n}}^+
            =\lim_{n \to \infty}\int h_{i_n} \, d\mu_{h_{i_n}} \\
            &\geq \lim_{n\to \infty}\int h_k \, d\mu_{g_{i_n}}
            = \int h_k \, d\mu_{h_\infty}=\int h_k \, d\mu_{h_\infty}^+,
    \end{align*}
    and letting $k \to \infty$ and applying the monotone convergence
    theorem, we get $S_x(h_\infty)=I_x(h_\infty)$.
\end{proof}

\begin{remark}
    In Section~5, we will introduce another layer of generality by replacing
    point evaluations with integration against any positive Radon measure.
\end{remark}

Directly applying Theorem~\ref{edwards}, we get the following lemma.

\begin{lemma}\label{thm:edwhypcon}
    Suppose that $g: \bar \Omega \to \R$ is a lower semicontinuous
    function such that
    $g(\zeta)=0$ for all $\zeta \in \bd \Omega$. Then
    \begin{align*}
        \MoveEqLeft\sup\big\{u(x) \suchthat u\in \E_0(\Omega) \cap C(\bar \Omega), u \leq g\big\} \\
        &= \sup\big\{u(x) \suchthat u\in \psh^0(\bar \Omega) , u \leq g\big\} \\
        &= \inf\Big\{\int g \, d\mu \suchthat \mu \in J_z^- \Big\}.
    \end{align*}
\end{lemma}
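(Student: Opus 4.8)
The plan is to deduce Lemma~\ref{thm:edwhypcon} from Theorem~\ref{edwards} by choosing the right subspace $H \subset C(\bar\Omega)$ and verifying the hypotheses. The natural candidate is $H = \{h \in C(\bar\Omega) \suchthat h|_{\bd\Omega} = 0\}$, so that $Z_H = \bd\Omega$ and, by Tietze, $E(C_c(\Omega))$ consists exactly of the elements of $H$ vanishing near the boundary. First I would check that $H$ is total outside its zeros: given $\varphi \in E(C_c(\Omega))$, one writes $\varphi$ as a decreasing limit of continuous functions vanishing on $\bd\Omega$ (e.g.\ by a standard mollification/cutoff argument, using that $\varphi$ vanishes near $\bd\Omega$), which lie in $H$. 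Second, I would verify that $\F := \E_0(\Omega) \cap C(\bar\Omega)$ is $H$-admissible. The delicate half is that every $h \in H$ has a minorant in $\F$: here one invokes hyperconvexity of $\Omega$ to produce a continuous negative exhaustion $\rho \in \E_0(\Omega) \cap C(\bar\Omega)$ (Cegrell's theory guarantees such $\rho$ exists in $\E_0$, and by Cegrell's approximation one may take it continuous), and then $C\rho \le h$ for $C$ large since $h$ is bounded and $\rho \le 0$ with $\rho < 0$ on $\Omega$; one should check $C\rho$ still lies in $\E_0 \cap C(\bar\Omega)$, which is immediate as $\E_0$ is a cone. The other half, that each $u \in \F$ is a decreasing limit of elements of $H$, is automatic since $u$ itself is continuous on $\bar\Omega$ and vanishes on $\bd\Omega$, so the constant sequence $h_i = u$ works.

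Next I would check the uniform bound hypothesis at the point $x$: for each $h \in H$ we need $u \in \F$ with $u \le h$ and $u(x) \ge -C_x|h|_\infty$. Using the exhaustion $\rho$ above, note $\rho(x)$ is a fixed negative number for $x \in \Omega$; then $u = |h|_\infty \rho / |\rho(x)|$ — or more simply, rescale so that $u = (|h|_\infty/c)\rho$ where $c = -\rho(x) > 0$ — satisfies $u \le h$ on $\bar\Omega$ (since $(1/c)\rho \le $ a function that is $\le -1$ away from the boundary... one must be slightly careful and instead take $u = |h|_\infty \psi$ where $\psi \in \E_0 \cap C(\bar\Omega)$ satisfies $\psi \le -1$ on the support issues) and $u(x) = -C_x|h|_\infty$ with $C_x = c/|\rho(x)|$ depending only on $x$. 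The cleanest route: since $g$ is bounded below and vanishes on $\bd\Omega$, and the relevant $h$ in the final step are the members of an increasing sequence below $g$, one only needs the bound for such $h$, but Theorem~\ref{edwards} asks for it for all $h \in H$, so I would just establish it for all of $H$ via the exhaustion function.

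With the hypotheses in place, Theorem~\ref{edwards} gives $S_x(h_\infty) = I_x(h_\infty)$ for every non-decreasing sequence $H \ni h_i \nearrow h_\infty$. Since $g$ is lower semicontinuous on the compact metric space $\bar\Omega$ and vanishes on $\bd\Omega$, it is the increasing limit of a sequence $h_i \in C(\bar\Omega)$; after replacing $h_i$ by $\max(h_i, -M)$ for $M$ a lower bound and then... actually one wants $h_i|_{\bd\Omega} = 0$, which can be arranged since $g|_{\bd\Omega} = 0$ by truncating near the boundary, so $h_i \in H$ and $h_i \nearrow g$. Hence $S_x(g) = I_x(g)$, which is precisely the equality between the first sup and the infimum over $J_z^-$. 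Finally, the middle equality $\sup\{u(x) \suchthat u \in \E_0 \cap C(\bar\Omega), u \le g\} = \sup\{u(x) \suchthat u \in \psh^0(\bar\Omega), u \le g\}$ follows because the Jensen measures of the two cones coincide (as recalled in the excerpt, via Cegrell's approximation theorem), so both envelopes equal the same $I_x(g)$; the inequality $\le$ is trivial from $\E_0 \cap C(\bar\Omega) \subset \psh^0(\bar\Omega)$, and $\ge$ follows from the duality applied to the larger cone together with the coincidence of Jensen measures. The main obstacle I anticipate is the careful construction of the minorant/exhaustion function and making the rescaling in the uniform bound genuinely work with the correct inequality $u \le h$ on all of $\bar\Omega$ rather than just on $\Omega$ — this is where hyperconvexity is really used and where sign and boundary-behavior bookkeeping must be done precisely.
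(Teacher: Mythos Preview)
Your overall architecture matches the paper's exactly: same $X=\bar\Omega$, same $H=\{h\in C(\bar\Omega)\suchthat h|_{\bd\Omega}=0\}$, same verification that both cones have the same Jensen measures so that applying Theorem~\ref{edwards} to either one yields all three equalities at once. Where your proposal diverges from the paper, however, is in the two places you yourself flag as delicate, and in one of them there is a genuine gap.

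Your minorant/uniform--bound argument does not work. You claim that for the exhaustion $\rho$ one has $C\rho\le h$ for $C$ large ``since $h$ is bounded and $\rho<0$ on $\Omega$''. This inference is false: take $\Omega$ the unit ball, $\rho(z)=|z|^2-1$, and $h(z)=-\sqrt{1-|z|^2}$. Then $h\in H$, but $C\rho\le h$ would require $C\sqrt{1-|z|^2}\ge 1$ for all $z\in\Omega$, impossible as $|z|\to 1$. Your fallback ``take $\psi\in\E_0\cap C(\bar\Omega)$ with $\psi\le -1$'' cannot help either, since every such $\psi$ tends to $0$ at $\bd\Omega$. The paper sidesteps the whole rescaling issue: granting that \emph{some} minorant $u_0\in\mathcal F$ with $u_0\le h$ exists (this is the $H$--admissibility the paper calls ``clear''; it does require a short argument, e.g.\ via a suitable series of relative extremal functions, but not the one you give), one simply replaces it by $u=\max\{u_0,-|h|_\infty\}$. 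The point is that this maximum remains in $\E_0(\Omega)\cap C(\bar\Omega)$ by Cegrell's \cite[Lemma~3.4]{Cegrell}, it still satisfies $u\le h$ (both $u_0$ and $-|h|_\infty$ do), and it gives $u(x)\ge -|h|_\infty$ for \emph{every} $x$, so $C_x=1$ uniformly. This $\max$--trick is the step you are missing.

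For the approximation of $g$ from below by elements of $H$, your ``truncate near the boundary'' is vague; the paper's device is cleaner and worth noting: pick one $\varphi\in H$ with $\varphi\le g$ by the Kat\v{e}tov--Tong insertion theorem, take any $g_i\in C(\bar\Omega)$ with $g_i\nearrow g$, and set $\varphi_i=\max\{\varphi,g_i\}\in H$, which increases to $g$.
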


\begin{proof}
    We first check that the conditions required for Theorem~\ref{edwards}
    are fulfilled with
    \[
        X= \bar \Omega, \quad
        H= \{ h \in C(\bar \Omega) \suchthat h\big|_{\bd \Omega} =0 \}, \quad
        \mathcal{F}\in \{\psh^0(\Omega), \E_0(\Omega) \cap C(\bar \Omega)\}.
    \]
    Clearly, $H$ is total outside its zeros, and $\mathcal{F}$ is
    $H$-admissible. For each $h \in H$, we may find $u\in \mathcal{F}$ such that
    $u\leq h$ and $u(z) \geq -|h|_\infty$. This follows for
    $\mathcal{F}=\E_0(\Omega) \cap C(\bar \Omega)$ since
    \[
        u \in \E_0(\Omega) \cap C(\bar \Omega) \implies
            \max \{u, -|h|_\infty\} \in \E_0(\Omega) \cap C(\bar \Omega)
    \]
    by \cite[Lemma~3.4]{Cegrell}. It remains to show that~$g$ may be
    approximated from below by elements in $H$. Pick $\varphi \in H$ such that
    $\varphi \leq g$ (such a function exists by the Katětov--Tong insertion
    theorem~\cite{tong}) and let $g_i \in C(\bar \Omega)$ approximate $g$
    pointwise from below. Then $\varphi_i :=\max\{\varphi, g_i\} \in H$, and
    $\varphi_i \nearrow g$ on $\bar \Omega$.
\end{proof}

\begin{proof}[Proof of Theorem~\ref{thm:edwhypcon2}]
    Pick any point $z_0$ outside the pluripolar hull of $P$ and let $u \in
    \psh^0(\Omega)$ satisfy $u \big|_P = -\infty$, with $u(z_0)>-\infty$. Then
    for any $\mu \in J_{z_0}^-$,
    \[
     \int_P u \, d\mu \geq \int_{\bar \Omega} u \, d\mu \geq u(z_0) > -\infty,
    \]
    and so $\mu(P)=0$. We conclude that $I_{z_0}(g)= I_{z_0}(g_*)$. On the other
    hand, it is clear that $S_{z_0}(g)\leq I_{z_0}(g)$, which together with
    Lemma~\ref{thm:edwhypcon} implies that
    \[
        S_{z_0}(g)\leq I_{z_0}(g) =  I_{z_0}(g_*) =  S_{z_0}(g_*) \leq S_{z_0}(g)
    \]
    for both cones. Since~$z_0$ was an arbitrary point outside the pluripolar
    hull of~$P$, this proves the first statement of the theorem.

    Now suppose that $g$ is upper semicontinuous, and that~$P$ is a closed,
    complete pluripolar set. Then, we have an equality
    \[
        \sup\big\{u(x) \suchthat u\in \E_0(\Omega) \cap C(\bar \Omega), u \leq g\big\} =
        \sup\big\{u(x) \suchthat u\in \psh^0(\Omega) , u \leq g\big\}
    \]
    between a lower semicontinuous function and upper semicontinuous function on
    an open set, which implies that the envelopes must be continuous on that
    set.
\end{proof}

In certain situations, it is possible to guarantee that even non-pluripolar
discontinuities do not propagate; the extremal function of interior balls in
hyperconvex domains are for example continuous everywhere~\cite{kerzman}. The
proof of this fact can be extended to the following gluing construction.

\begin{theorem}
    Let $\Omega$ be a B-regular domain, and let $\Omega' \subset \Omega$ be a
    relatively compact subdomain with the property that for each $p \in \bd
    \Omega'$, there exists a ball $B \subset \Omega'$ with $p \in \bd B$. Let $h
    \in C (\bar \Omega)$, $u \in \psh(\Omega)$ be such that $u \leq h$, and
    assume that $u$ is continuous on $\bar \Omega'$. Then the Perron--Bremermann
    envelope $P(h_{u, \Omega'})$ of
    \[
        h_{u, \Omega'}(z) :=
        \begin{cases}
            u(z) \text{ if }z\in \bar{\Omega'}, \\
            h(z) \text{ if }z\in \Omega \setminus \bar{\Omega'}
        \end{cases}
    \]
    is continuous.
\end{theorem}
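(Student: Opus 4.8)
The plan is to show that the Perron--Bremermann envelope $\varphi := P(h_{u,\Omega'})$, whose upper semicontinuous regularization $\varphi^*$ is plurisubharmonic, is in fact continuous; since a plurisubharmonic function is upper semicontinuous, it suffices to prove that $\varphi$ is lower semicontinuous (the equality $\varphi = \varphi^*$ falling out along the way). First I would dispose of $\bar{\Omega'}$: as $u \le h$ on $\Omega$ we have $u \le h_{u,\Omega'}$, so $u$ is a competitor and $\varphi \ge u$ everywhere, while every competitor $v$ satisfies $v \le h_{u,\Omega'} = u$ on $\bar{\Omega'}$, and since $u$ is continuous this bound is inherited by $\varphi$ and $\varphi^*$; hence $\varphi = u$ on $\bar{\Omega'}$. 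In particular $\varphi$ is continuous on the open set $\Omega'$ and carries the continuous boundary values $u|_{\bd\Omega'}$ along $\bd\Omega'$ from the inside; also, from $h_{u,\Omega'}\le h$ and B-regularity of $\Omega$, $\varphi \le P(h)$ with $P(h)$ continuous.

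It remains to treat $\Omega \setminus \bar{\Omega'}$, up to and including $\bd\Omega'$. By the gluing lemma, $\varphi|_{\Omega\setminus\bar{\Omega'}}$ is the largest plurisubharmonic function $v$ on $\Omega\setminus\bar{\Omega'}$ satisfying $v\le h$ and $\limsup_{\zeta\to p}v(\zeta)\le u(p)$ for every $p\in\bd\Omega'$ (one direction restricts a global competitor; the other glues $\max\{v,u\}$ across $\bd\Omega'$). This is the maximal plurisubharmonic function on $\Omega\setminus\bar{\Omega'}$ below the \emph{continuous} obstacle $h$, with effective (continuous) boundary values $u|_{\bd\Omega'}$ on the inner boundary and no constraint other than the obstacle on $\bd\Omega$. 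By a Walsh/Bremermann-type argument such an envelope is continuous provided one has, at each boundary point, a plurisubharmonic barrier for the domain $\Omega\setminus\bar{\Omega'}$: at points of $\bd\Omega$ this is exactly B-regularity, and at a point $p\in\bd\Omega'$ the only thing not already clear is the lower estimate
\[
    \liminf_{\substack{\zeta\to p\\ \zeta\notin\bar{\Omega'}}}\varphi(\zeta)\ \ge\ u(p),
\]
which, together with upper semicontinuity (which gives the reverse inequality, since $\varphi(p)=u(p)$), is all that is left to prove.

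This last estimate is where the interior-ball hypothesis is used, in the spirit of Kerzman's proof~\cite{kerzman} that the extremal function of a ball is continuous. Fix $\eps>0$ and $p\in\bd\Omega'$, and let $B=B(a,r)\subset\Omega'$ with $p\in\bd B$; replacing $B$ by a smaller ball through $p$ (centre on the segment from $p$ towards $a$) we may take $r$ as small as we like, so that $\bar B\setminus\{p\}\subset\Omega'$, $u>u(p)-\eps$ on $\bar B$, and --- using $h(p)\ge u(p)$ --- $h>u(p)-\eps$ near $p$. The point is that, although the usual classical barriers $-\log\abs{\zeta-a}$ and $-\abs{\zeta-a}^2$ are not plurisubharmonic, the function $\log\abs{\zeta-a}$ \emph{is} plurisubharmonic and continuous away from $a$, so
\[
    \eta(\zeta):=\bigl(u(p)-\eps\bigr)+\alpha\bigl(\log\abs{\zeta-a}-\log r\bigr)
\]
is continuous plurisubharmonic near $\bar B$, equals $u(p)-\eps$ on $\bd B\ni p$, and for $\alpha>0$ small stays within $\eps$ of $u(p)-\eps$ on a prescribed neighbourhood of $\bar B$. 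Truncating $\eta$ from below by a sufficiently negative constant and gluing (Walsh-style, taking the maximum with a global competitor across the boundary of a slightly enlarged ball $B(a,r+\tau)$) then produces a continuous plurisubharmonic competitor $w\le h_{u,\Omega'}$ with $w\ge u(p)-3\eps$ throughout a punctured neighbourhood of $p$ in $\Omega\setminus\bar{\Omega'}$; hence $\liminf_{\zeta\to p}\varphi(\zeta)\ge u(p)-3\eps$, and $\eps\to0$ finishes.

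The hard part is precisely this gluing: one must ensure the local modification near $p$ patches to a genuine competitor on all of $\Omega$, i.e.\ that the glued function is $\le u$ on the \emph{whole} of $\bar{\Omega'}$ (not merely on $\bar B$, in particular where $\bd\Omega'$ bulges beyond the interior ball) and $\le h$ on all of $\Omega\setminus\bar{\Omega'}$ (where, just outside $\bar{\Omega'}$, the competitor $u$ used in the gluing is not under control). Making this work forces careful quantitative choices of $r$, $\tau$, the slope $\alpha$, the truncation level and the global competitor used for gluing --- exactly the part of Kerzman's ball argument that needs to be extended. Once this barrier at $\bd\Omega'$ is in place, assembling it with the easy part on $\bar{\Omega'}$, B-regularity on $\bd\Omega$, and the Walsh-type interior argument yields continuity of $P(h_{u,\Omega'})$ on $\Omega$.
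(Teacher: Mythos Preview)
There is a genuine gap in your first paragraph. The obstacle $h_{u,\Omega'}$ is \emph{not} upper semicontinuous: at any $p\in\partial\Omega'$ with $h(p)>u(p)$ it jumps upward as one leaves $\bar\Omega'$, so the usual argument ``$\varphi^*\le(h_{u,\Omega'})^*=h_{u,\Omega'}$, hence $\varphi^*$ is a competitor, hence $\varphi=\varphi^*$'' is unavailable. Your specific claim that the bound $\varphi\le u$ on $\bar\Omega'$ is ``inherited by $\varphi^*$'' fails precisely at such $p$: the regularization $\varphi^*(p)=\limsup_{z\to p}\varphi(z)$ involves points $z\notin\bar\Omega'$, where one only knows $\varphi(z)\le h(z)\to h(p)$. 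Thus the inequality you dismiss as automatic,
\[
    \limsup_{\zeta\to p}\varphi(\zeta)\ \le\ u(p),
\]
is exactly the nontrivial estimate, and this is where the interior-ball hypothesis is really needed.

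The paper uses the interior ball in the \emph{opposite} direction to yours. Taking $B(\zeta,r)\subset\Omega'$ with $p\in\partial B(\zeta,r)$ and a larger concentric ball $B(\zeta,R)$, it bounds every competitor $v$ on $B(\zeta,R)$ from \emph{above} by a translated toric envelope (essentially the relative extremal function of $B(0,r)$ in $B(0,R)$, shifted by $\max_{\partial B(\zeta,R)}h$), which is continuous and takes the value $u(p)+\eps$ at $p$. This yields $\limsup_{z\to p}\varphi(z)\le u(p)$; only then is $\varphi^*$ a competitor, $\varphi$ plurisubharmonic, and Walsh's theorem applicable on $\Omega\setminus\bar\Omega'$. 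Your $\log|\zeta-a|$ construction instead produces a competitor that is \emph{large} near $p$ and so addresses the lower bound $\liminf_{\zeta\to p}\varphi(\zeta)\ge u(p)$; that direction does deserve a word (since $u$ need not be lower semicontinuous at $p$ from outside $\bar\Omega'$), but it is not where the interior-ball condition does its essential work, and the elaborate gluing you anticipate as ``the hard part'' is aimed at the wrong inequality.
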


    \begin{proof}
    Since $h_{u, \Omega'}$ is upper semicontinuous, $P(h_{u, \Omega'})$ is a
    plurisubharmonic function. Notice that
    \begin{align*}
        P(h_{u, \Omega'})\big|_{\bd \Omega} &= h, \\
        P(h_{u, \Omega'})\big|_{\Omega'} &= u,
    \end{align*}
    and that $P(h_{u, \Omega'})_*\big|_{\bar{\Omega'}}= u$ since $P(h_{u,
    \Omega'})\geq u$ and $u$ is continuous on $\bar \Omega'$. In order to show
    that $P(h_{u, \Omega'})$ is continuous outside $\Omega'$ as well, we will now
    show that
    \[
        P(h_{u, \Omega'})^*\big|_{\bar{\Omega'}}= u.
    \]
    Fix $\varepsilon>0$ and pick $p \in \bd \Omega'$ and two concentric balls
    $B(\zeta, r) \subset B(\zeta, R) \subset \Omega'$ such that
    \[
        p\in \bd B(\zeta, r), \quad |u(z) -u(p)| <
            \varepsilon \text{ on }\bar B(\zeta, r).
    \]
    Then the Perron--Bremermann envelope
    \[
        \varphi_{r,R}(z) := \sup \{v(z) \suchthat v \in \psh^-(B(0, R)), v \big|_{B(0,r)}
        \leq u(p)+\varepsilon - \max_{\bd B(\zeta,R)} h\}
    \]
    is toric, and thus continuous. Hence
    \[
        \limsup_{z \to p} P(h_{u, \Omega'}) \leq
        \limsup_{z \to p} \varphi_{r,R}(z+\zeta) + \max_{\bd B(\zeta,R)} h
        \leq u(p) + \varepsilon.
    \]
    Letting $\varepsilon \to 0$, we conclude that $P(h_{u, \Omega'})$ is
    continuous on the boundary of $\Omega \setminus \bd \Omega'$, and by theorem
    of Walsh~\cite{walsh}, continuous in the interior as well.
\end{proof}

\begin{remark}
    If $h\big|_{\bd \Omega} =0$, it is enough to assume that $\Omega$ is
    hyperconvex.
\end{remark}

Finally, Theorem~\ref{edwards} implies the following statement for envelopes of
a class of unbounded functions.

\begin{theorem}\label{obegransad} Let $\Omega$ be a hyperconvex domain, and let
    $g: \bar \Omega \to [-\infty, 0]$ be such that $g^*=g_*$ on $\bar \Omega$,
    $g = 0$ on $\partial \Omega$. Furthermore, assume that there exists $v \in
    \psh^-(\Omega)$ such that
    \begin{align*}
      & g(z_0)=-\infty \implies v(z_0)=-\infty,\quad\text{and} \\
      & \frac{v(z)}{g(z)} \to \infty \quad\text{as}\quad g(z) \to -\infty.
    \end{align*}
    Then the envelope
    \[
         \sup \big\{u(z) \suchthat u \in \psh^-(\Omega), u^* \le g \big\}
    \]
    is continuous on $\{z \in \bar\Omega \suchthat v_*(z) \neq -\infty\}$.
\end{theorem}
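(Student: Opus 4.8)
The plan is to reduce to the bounded situation already settled in Lemma~\ref{thm:edwhypcon} and Theorem~\ref{thm:edwhypcon2} by truncating~$g$, and then to use the auxiliary function~$v$ to control the error that this truncation causes at points of $U:=\set{z\in\bar\Omega\suchthat v_*(z)\neq-\infty}$. Write $\psi$ for the envelope in the statement; since its competitors are plurisubharmonic one may drop the star and write $\psi=\sup\set{u(z)\suchthat u\in\psh^-(\Omega),\ u\le g}$. First I would set $g_j:=\max(g,-j)$. As $g^*=g_*$, the function $g$ is continuous as a map $\bar\Omega\to[-\infty,0]$, so each $g_j$ is a real-valued continuous function on $\bar\Omega$ with $g_j|_{\bd\Omega}=0$, and $g_j\searrow g$. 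By Theorem~\ref{thm:edwhypcon2} applied to~$g_j$ (with empty exceptional set) the envelope $\psi_j:=\sup\set{u\suchthat u\in\psh^0(\bar\Omega),\ u\le g_j}$ is continuous on $\bar\Omega$ and vanishes on $\bd\Omega$; a gluing argument — for $u\in\psh^-(\Omega)$ with $u\le g_j$ the function $\max(u,\psi_j)$, extended by $0$ on $\bd\Omega$, lies in $\psh^0(\bar\Omega)$ and is $\le g_j$, hence $\le\psi_j$ — shows that $\psi_j$ equals $\sup\set{u\suchthat u\in\psh^-(\Omega),\ u\le g_j}$ as well. Since $g\le g_j$ we get $\psi\le\psi_j$ for all $j$, hence $\psi\le\inf_j\psi_j$; conversely $\inf_j\psi_j$ is a decreasing limit of plurisubharmonic functions which is $\le\inf_jg_j=g$, so (where it is not identically $-\infty$) it is a competitor for $\psi$, and $\psi=\inf_j\psi_j$. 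Being a decreasing limit of continuous functions, $\psi$ is upper semicontinuous on $\bar\Omega$, so it remains to prove lower semicontinuity — hence continuity — at each $z_0\in U$.

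The heart of the argument is the following estimate. Fix $z_0\in U$; then $g(z_0)>-\infty$ (otherwise $v(z_0)=-\infty$, whence $v_*(z_0)=-\infty$ by upper semicontinuity), and $v_*(z_0)>-\infty$ gives $N,r>0$ with $v\ge-N$ on $B:=B(z_0,r)$. Fix $t\in(0,1)$. Using $v(z)/g(z)\to\infty$ as $g(z)\to-\infty$, choose $M_1>0$ with $v\le g$ on $\set{g\le-M_1}$, and then $\lambda^*>1/t$ and $M^*\ge M_1$ with $v\le\lambda^*g$ on $\set{g\le-M^*}$. I claim that for every $j\ge M^*$ the function
\[
    w:=(1-t)\psi_j+tv-tM_1
\]
belongs to $\psh^-(\Omega)$ and satisfies $w\le g$ on $\Omega$. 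Indeed $w$ is plurisubharmonic with $w\le-tM_1<0$, and since $\psi_j\le g_j$ it suffices to verify $(1-t)g_j+tv\le g+tM_1$, which I would check on the three regions $\set{g>-M_1}$ (there $g_j=g$ and $v\le0<g+M_1$), $\set{-j\le g\le-M_1}$ (there $g_j=g$ and $v\le g\le g+M_1$) and $\set{g<-j}$ (there $g_j=-j$, $v\le\lambda^*g$, and $-(1-t)j+t\lambda^*g\le g+tM_1$ because $t\lambda^*-1>0$ and $g<-j<0$). Consequently $w\le\psi$, so on $B$
\[
    \psi\ge(1-t)\psi_j+tv-tM_1\ge(1-t)\psi_j-t(N+M_1),
\]
and since $\psi\le\psi_j\le0$ there, this yields $0\le\psi_j-\psi\le t(N+M_1)$ on~$B$.

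To finish, given $\varepsilon>0$ I would choose $t$ with $t(N+M_1)\le\varepsilon$ and then $j\ge M^*$ as above, so that $\psi_j-\psi\le\varepsilon$ on~$B$. Since $\psi\le\psi_j$ with $\psi_j$ continuous and $\psi_j(z_0)\searrow\psi(z_0)$,
\[
    \liminf_{z\to z_0}\psi(z)\ge\liminf_{z\to z_0}\bigl(\psi_j(z)-\varepsilon\bigr)=\psi_j(z_0)-\varepsilon\ge\psi(z_0)-\varepsilon,
\]
and letting $\varepsilon\to0$ gives lower semicontinuity of $\psi$ at~$z_0$; taking $\varepsilon=1$ also shows $\psi(z_0)>-\infty$ on~$U$, which retroactively justifies the identity $\psi=\inf_j\psi_j$ on the components of $\Omega$ meeting~$U$. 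For $z_0\in U\cap\bd\Omega$ the same estimate applies with $\psi_j(z)\to\psi_j(z_0)=0$, forcing $\psi(z)\to0=g(z_0)$.

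I expect the main obstacle to be the construction of the competitor~$w$ and the verification that $w\le g$: this is exactly where the growth of~$v$ is used, and one must invoke $v/g\to\infty$ in the quantitative form ``$v\le\lambda g$ whenever $g\le-M_\lambda$'' with $\lambda=\lambda^*$ taken large in terms of~$t$ (hence of~$\varepsilon$), which is why the truncation level~$j$ has to be taken correspondingly large; the shift by $tM_1$ is precisely what keeps the inequality valid on the region where $g$ is only mildly negative while $v$ may be close to~$0$.
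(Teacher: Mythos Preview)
Your argument is correct and is precisely the kind of truncation-plus-barrier argument the paper has in mind; note that the paper does not actually prove this theorem but only refers to \cite[Section~4]{nilsson}, saying the proof is ``completely analogous'' to the B-regular case treated there. Your write-up supplies those details: approximate $g$ by the continuous truncations $g_j=\max(g,-j)$, use Theorem~\ref{thm:edwhypcon2} to get continuous envelopes~$\psi_j$, identify $\psi$ with $\inf_j\psi_j$, and then manufacture the competitor $w=(1-t)\psi_j+tv-tM_1$ to force uniform convergence $\psi_j\to\psi$ on sets where $v$ is bounded below. The three-region verification that $w\le g$ is exactly where the growth hypothesis $v/g\to\infty$ enters, and you use it correctly.

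One small slip: the parenthetical ``otherwise $v(z_0)=-\infty$, whence $v_*(z_0)=-\infty$ by upper semicontinuity'' is not a valid implication (upper semicontinuity of~$v$ gives $v^*=v$, not $v_*=v$). The conclusion $g(z_0)>-\infty$ is nevertheless true and follows immediately from what you establish next: $v\ge -N$ on $B$ gives $v(z_0)>-\infty$, and the hypothesis $g(z_0)=-\infty\Rightarrow v(z_0)=-\infty$ then forces $g(z_0)>-\infty$. In fact you never use $g(z_0)>-\infty$ directly in the estimate, so you could simply drop that sentence.
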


\begin{proof}
    The proof is completely analogous to the case when the domain is B-regular. See \cite[Section~4]{nilsson} for more details.
\end{proof}

\section{Plurisubharmonic orderings of measures}

Let us begin this section by giving two alternative orderings induced by
plurisubharmonic functions.

\begin{definition}
    Let $\Omega \subset \Cn$ be a bounded domain, and let $\M(\bar\Omega)$ denote
    the set of finite positive Radon measures supported on~$\bar\Omega$.

    Let $\mu, \nu \in \M(\bar\Omega)$. We say that $\mu \lepshc \nu$ if
    \[
        \int u\,\ext\mu \ge \int u\,\ext\nu \quad\text{for all $u \in \plsh(\Omega)
        \cap C(\bar\Omega)$}.
    \]
    Note the reversal of the inequality. We choose this notation to keep
    comparisons with Bengtson's ordering easier to formulate. Also note that we
    allow the measures to have positive mass on $\partial\Omega$.

    Similarly, we say that $\mu \lepshm \nu$ if
    \[
        \int u^*\,\ext\mu \ge \int u^*\,\ext\nu \quad\text{for all upper
        bounded $u \in \plsh(\Omega)$}.
    \]
\end{definition}

By requiring the defining inequality to hold, not only for negative
plurisubarmonic functions, comparable measures automatically have the same total
mass.

\begin{proposition}\label{prop:equal_mass}
  If $\mu \lepshc \nu$ (or $\mu \lepshm \nu$), then $\mu(\bar\Omega)
  = \nu(\bar\Omega)$.
\end{proposition}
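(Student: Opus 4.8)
The plan is to exploit the fact that the constant functions $1$ and $-1$ both belong to $\plsh(\Omega)\cap C(\bar\Omega)$ (and are upper bounded plurisubharmonic functions, equal to their own upper semicontinuous regularizations), so that the defining inequality can be applied in both directions. Concretely, if $\mu \lepshc \nu$, then taking $u \equiv 1$ gives $\int 1\,\ext\mu \ge \int 1\,\ext\nu$, i.e. $\mu(\bar\Omega) \ge \nu(\bar\Omega)$; taking $u \equiv -1$ gives $\int (-1)\,\ext\mu \ge \int (-1)\,\ext\nu$, i.e. $-\mu(\bar\Omega) \ge -\nu(\bar\Omega)$, hence $\mu(\bar\Omega) \le \nu(\bar\Omega)$. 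Combining the two yields $\mu(\bar\Omega) = \nu(\bar\Omega)$.

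For the ordering $\lepshm$ the argument is identical: the constants $\pm 1$ are upper bounded plurisubharmonic functions on $\Omega$ and satisfy $(\pm 1)^* = \pm 1$, so applying the defining inequality to $u \equiv 1$ and then to $u \equiv -1$ again pins down $\mu(\bar\Omega) = \nu(\bar\Omega)$. One only needs to note that both integrals are finite, which holds since $\mu,\nu \in \M(\bar\Omega)$ are finite measures.

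There is essentially no obstacle here; the only point worth a word of care is that the relevant function space must actually contain a nonzero constant (equivalently, both $+c$ and $-c$ for some $c>0$), which is exactly what distinguishes these orderings from Bengtson's $\lepsh$, where one only tests against $\E_0(\Omega) \subset \plsh^-(\Omega)$ and so total mass is not controlled. I would simply remark that the proof breaks down for $\lepsh$ precisely because $\E_0(\Omega)$ contains no constants, which is the phenomenon motivating the introduction of $\lepshc$ and $\lepshm$ in the first place.
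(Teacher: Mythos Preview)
Your proof is correct and is exactly the paper's argument: apply the defining inequality to the constant plurisubharmonic functions $u\equiv 1$ and $u\equiv -1$ to obtain both $\mu(\bar\Omega)\ge\nu(\bar\Omega)$ and $\mu(\bar\Omega)\le\nu(\bar\Omega)$. Your additional remarks about finiteness and the contrast with $\lepsh$ are accurate but go beyond what the paper records.
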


\begin{proof}
  Apply the definition to the plurisubharmonic functions $u(z) \equiv 1$ and
  $u(z) \equiv -1$, respectively.
\end{proof}

Furthermore, for finite measures on $\Omega$ with equal total mass, Bengtson's
ordering coincides with our ordering.

\begin{proposition}
  If $\mu \lepsh \nu$ (in the sense of Bengtson) and $\mu(\Omega) = \nu(\Omega)
  < \infty$, then $\mu \lepshm \nu$, and hence $\mu \lepshc \nu$.
\end{proposition}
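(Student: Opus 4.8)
The plan is to extend the defining inequality from Cegrell's test functions $\E_0(\Omega)$ to all upper bounded plurisubharmonic functions, using the equal-mass hypothesis to normalise by additive constants and two monotone approximations to pass to the limit.

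I would first clear away the routine reductions. Since $\mu$ and $\nu$ are carried by $\Omega$, integrals over $\bar\Omega$ and over $\Omega$ agree, and $u^{*}=u$ on $\Omega$ for every $u\in\plsh(\Omega)$; hence $\mu\lepshm\nu$ amounts to $\int_{\Omega}u\,\ext\mu\ge\int_{\Omega}u\,\ext\nu$ for all upper bounded $u\in\plsh(\Omega)$, and $\mu\lepshc\nu$ is the sub-case $u\in\plsh(\Omega)\cap C(\bar\Omega)$, so it suffices to establish the former. Given such a $u$ with $u\le M$, the function $u-M$ is negative and plurisubharmonic, and because $\mu(\Omega)=\nu(\Omega)<\infty$ the inequality for $u$ is equivalent to the one for $u-M$; thus we may assume $u\le 0$. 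Truncating, $u_{k}:=\max(u,-k)$ is a bounded negative plurisubharmonic function with $u_{k}\searrow u$, and since each $u_{k}\le 0$ belongs to $L^{1}(\mu)\cap L^{1}(\nu)$, the monotone convergence theorem reduces the problem to showing $\int_{\Omega}v\,\ext\mu\ge\int_{\Omega}v\,\ext\nu$ for every bounded $v\in\psh^{-}(\Omega)$.

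The bounded case is the heart of the matter. Fix once and for all a function $\rho\in\E_{0}(\Omega)$, which exists since $\Omega$ is hyperconvex, and put $v_{j}:=\max(v,j\rho)$. Each $v_{j}$ is plurisubharmonic, satisfies $-C\le v_{j}\le 0$ where $-C\le v$ on $\Omega$, and tends to $0$ along $\partial\Omega$ because $j\rho\le v_{j}\le 0$. Furthermore $v_{j}\ge\max(-C,j\rho)$, and $\max(-C,j\rho)\in\E_{0}(\Omega)$ as the maximum of an $\E_{0}$-function with a constant (by \cite[Lemma~3.4]{Cegrell}); the comparison theory for Cegrell's classes then forces $\int_{\Omega}(\ddc v_{j})^{n}<\infty$, so $v_{j}\in\E_{0}(\Omega)$. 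As $\rho<0$ on $\Omega$ we have $j\rho\searrow-\infty$ pointwise, whence $v_{j}\searrow v$. Now apply the hypothesis $\mu\lepsh\nu$ to $v_{j}\in\E_{0}(\Omega)$ to get $\int v_{j}\,\ext\mu\ge\int v_{j}\,\ext\nu$, and let $j\to\infty$; monotone convergence (all $v_{j}\le 0$) yields $\int_{\Omega}v\,\ext\mu\ge\int_{\Omega}v\,\ext\nu$, which finishes the proof, the statement about $\lepshc$ being the sub-case already recorded.

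I expect the genuine obstacle to be the assertion that $v_{j}=\max(v,j\rho)$ has finite total Monge--Ampère mass, hence lies in $\E_{0}(\Omega)$: this is precisely where hyperconvexity of $\Omega$ and the comparison principle in Cegrell's pluripotential theory are needed, and it must be cited or argued carefully, for instance by sandwiching $v_{j}$ between $\max(-C,j\rho)\in\E_{0}(\Omega)$ and $0$. Everything else is bookkeeping with decreasing limits, with the equal-mass normalisation being exactly what allows the constant $M$ to cancel in the first reduction.
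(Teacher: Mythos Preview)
Your proof is correct and follows the same two-step strategy as the paper: use the equal-mass hypothesis to subtract a constant and reduce to negative plurisubharmonic functions, then extend the defining inequality from $\E_0(\Omega)$ to all of $\psh^-(\Omega)$. The paper simply invokes \cite[Prop.~3.2c]{Bengtson} for this extension, whereas you reprove it explicitly via the approximation $v_j=\max(v,j\rho)$; the point you flag---that $v_j$ has finite Monge--Amp\`ere mass---is indeed handled by the sandwich $\max(-C,j\rho)\le v_j\le 0$ together with the comparison principle for bounded plurisubharmonic functions with zero boundary values, so your more self-contained argument goes through.
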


\begin{proof}
  Assume that $\mu \lepsh \nu$. Then, in particular, the measures are carried by
  $\Omega$. By Bengtson~\cite[Prop. 3.2c]{Bengtson}, it follows that
  \[
      \int u\,d\mu \ge \int u\,d\nu
  \]
  for all negative plurisubharmonic function $u$, not just functions in $\E_0$.
  If $v \in \plsh(\Omega)$ is upper bounded, then $u = v-M < 0$ for $M$ large
  enough, and since $\mu(\Omega) = \nu(\Omega)$, it follows that $\int v\,d\mu
  \ge \int v\,d\nu$.
\end{proof}

Let us also remark that there is no difference between the orderings $\lepshc$
and $\lepshm$ on B-regular domains.

\begin{proposition}
  Assume that $\Omega$ is B-regular, and that $\mu$ and $\nu$ are positive Radon
  measures on $\bar\Omega$. Then $\mu \lepshc \nu$ if and only if $\mu \lepshm
  \nu$.
\end{proposition}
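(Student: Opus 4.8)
The easy implication is that $\mu \lepshm \nu$ implies $\mu \lepshc \nu$: for any $u \in \plsh(\Omega) \cap C(\bar\Omega)$ we have $u^* = u$ on all of $\bar\Omega$ by continuity up to the boundary, so the inequality $\int u^*\,\ext\mu \ge \int u^*\,\ext\nu$ coming from the definition of $\lepshm$ is exactly the one required for $\lepshc$. The whole role of B-regularity is to upgrade this to the converse, so from now on I would assume $\mu \lepshc \nu$ and fix an upper bounded $u \in \plsh(\Omega)$; replacing $u$ by its upper semicontinuous regularization over $\bar\Omega$ (which does not change $u$ on $\Omega$ since $u$ is already upper semicontinuous there), we may assume $u = u^*$ is upper semicontinuous on the compact set $\bar\Omega$ and $u \le M < \infty$.

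The plan is to reduce everything to producing a decreasing sequence $v_j \in \plsh(\Omega) \cap C(\bar\Omega)$ with $v_j \searrow u$ pointwise on all of $\bar\Omega$ — crucially including $\bd\Omega$, since $\mu$ and $\nu$ may carry mass there. Granting such a sequence, the hypothesis gives $\int v_j\,\ext\mu \ge \int v_j\,\ext\nu$ for every $j$; since $\mu$ and $\nu$ are finite (Radon measures on a compact set) and the $v_j$ are uniformly bounded above, the monotone convergence theorem applied to the increasing nonnegative sequences $M' - v_j$, with $M' = \sup_{\bar\Omega} v_1$, lets me pass to the limit and conclude $\int u\,\ext\mu \ge \int u\,\ext\nu$, which is the defining inequality for $\lepshm$.

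To build the sequence, write $u = \inf_j g_j$ for a decreasing sequence $g_j \in C(\bar\Omega)$ — possible because $u$ is upper semicontinuous and bounded above on a compact set — and set $v_j := \sup\set{ w \in \plsh(\Omega) \suchthat w \le g_j }$, the Perron--Bremermann envelope of $g_j$. Since $g_j$ is decreasing, so is $v_j$, and on $\Omega$ the bound $u \le g_j$ makes $u$ a competitor, so $u \le v_j \le g_j$ and hence $v_j \searrow u$ on $\Omega$. The point at which B-regularity is used is the claim that $v_j \in \plsh(\Omega) \cap C(\bar\Omega)$, which is a standard consequence of B-regularity (the characterization, going back to Sibony, of B-regular domains as those on which Perron--Bremermann envelopes of continuous data are continuous up to the boundary; see \cite{Wikstrom} and the references therein). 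Given this continuity, for $\zeta \in \bd\Omega$ we have $v_j(\zeta) = \lim_{\Omega \ni z \to \zeta} v_j(z)$, which is squeezed between $\limsup_{z\to\zeta} u(z) = u(\zeta)$ and $\limsup_{z\to\zeta} g_j(z) = g_j(\zeta)$; letting $j \to \infty$ yields $\lim_j v_j(\zeta) = u(\zeta)$ on $\bd\Omega$ as well.

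The main obstacle is precisely this last point: keeping the plurisubharmonic approximants continuous up to $\bd\Omega$ and forcing them to converge to the correct boundary values, rather than merely on $\Omega$. This is exactly what B-regularity is designed to supply, and it is also where the equivalence can fail in its absence — on a merely hyperconvex domain the envelopes $v_j$ need not be continuous at $\bd\Omega$, and $\lepshc$ and $\lepshm$ can genuinely differ.
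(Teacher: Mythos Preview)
Your proof is correct and follows essentially the same approach as the paper's: both reduce the hard implication to the existence of a decreasing sequence in $\plsh(\Omega)\cap C(\bar\Omega)$ converging to $u^*$ on all of $\bar\Omega$, and then invoke monotone convergence. The only difference is that the paper cites this approximation result directly from~\cite{Wikstrom}, whereas you reconstruct it explicitly via Perron--Bremermann envelopes of a continuous majorizing sequence; your construction is in fact precisely how that approximation theorem is proved.
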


\begin{proof}
  Clearly, if $\mu \lepshm \nu$, then $\mu \lepshc \nu$. Conversely, assume that
  $\mu \lepshc \nu$ and let $u \in \plsh(\Omega)$ be an upper bounded
  plurisubharmonic function. Then, by an approximation theorem
  in~\cite{Wikstrom}, there is a decreasing sequence $u_j \in \plsh(\Omega) \cap
  C(\bar\Omega)$, such that $u_j \searrow u^*$ on $\bar\Omega$. Hence, by
  monotone convergence
  \[
      \int u^*\,d\nu = \lim_{j \to \infty} \int u_j\,d\nu
      \le \lim_{j \to \infty} \int u_j\,d\mu = \int u^*\,d\mu,
  \]
  and consequently $\mu \lepshm \nu$.
\end{proof}

On B-regular domains, two measures supported on the boundary are comparable only
if they are equal.

\begin{proposition}\label{prop:boundary_measures}
    Let $\Omega$ be a B-regular
    domain. If $\mu \lepshc \nu$ are both supported on $\partial\Omega$, then
    $\mu = \nu$.
\end{proposition}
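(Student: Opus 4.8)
The plan is to use the fact that on a B-regular domain, continuous functions on the boundary extend to continuous plurisubharmonic functions on $\bar\Omega$, so that the defining inequality for $\lepshc$ becomes symmetric when the measures live on $\partial\Omega$. More precisely, recall that $\Omega$ being B-regular means exactly that every $f \in C(\partial\Omega)$ admits a plurisubharmonic extension $u \in \plsh(\Omega) \cap C(\bar\Omega)$ with $u|_{\partial\Omega} = f$ (this is the Walsh/Sibony characterization of B-regularity).

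First I would fix an arbitrary $f \in C(\partial\Omega)$ and choose such an extension $u \in \plsh(\Omega) \cap C(\bar\Omega)$ with $u|_{\partial\Omega} = f$. Since $\mu \lepshc \nu$, applying the definition to $u$ gives
\[
    \int_{\partial\Omega} f \, d\mu = \int u \, d\mu \ge \int u \, d\nu = \int_{\partial\Omega} f \, d\nu,
\]
where the first and last equalities use that $\mu$ and $\nu$ are supported on $\partial\Omega$ and $u = f$ there. Next I would apply the same reasoning to $-f$: since $-f \in C(\partial\Omega)$ as well, it has a plurisubharmonic extension $v$, and the same computation yields $\int_{\partial\Omega}(-f)\,d\mu \ge \int_{\partial\Omega}(-f)\,d\nu$, i.e. $\int_{\partial\Omega} f\,d\mu \le \int_{\partial\Omega} f\,d\nu$. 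Combining the two inequalities, $\int_{\partial\Omega} f\,d\mu = \int_{\partial\Omega} f\,d\nu$ for every $f \in C(\partial\Omega)$. Since $\partial\Omega$ is a compact metric space and both $\mu, \nu$ are finite positive Radon measures on it, the Riesz representation theorem (uniqueness part) forces $\mu = \nu$.

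I do not expect any serious obstacle here; the only point requiring a little care is making explicit which characterization of B-regularity is being invoked, namely that continuous boundary data extends to a continuous plurisubharmonic function on the closure — this is standard (see e.g.\ Sibony, or the references already cited as \cite{Wikstrom}) and is arguably the defining property in this context. One could alternatively phrase the argument as: the linear span of $\{u|_{\partial\Omega} : u \in \plsh(\Omega)\cap C(\bar\Omega)\}$ is all of $C(\partial\Omega)$ by B-regularity, and $\mu \lepshc \nu$ together with $\mu \lepshc \nu$ applied to negatives shows $\mu$ and $\nu$ agree on this span, hence on $C(\partial\Omega)$, hence as measures.
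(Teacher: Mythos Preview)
Your proof is correct and follows essentially the same approach as the paper: both use B-regularity to extend continuous boundary data to functions in $\plsh(\Omega)\cap C(\bar\Omega)$, so that the ordering inequality can be applied to an arbitrary $f\in C(\partial\Omega)$. The only cosmetic difference is that the paper obtains the reverse inequality by noting that $\mu-\nu$ is then a positive measure of total mass zero (via Proposition~\ref{prop:equal_mass}), whereas you obtain it by extending $-f$ directly; these are equivalent.
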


\begin{proof}
  Let $\phi \in C(\partial\Omega)$. Since $\Omega$ is B-regular, $\phi$ extends
  to $\plsh(\Omega) \cap C(\bar\Omega)$, and hence
  \[
      \int \phi\,d\nu \le \int \phi\,d\mu
  \]
  for every $\phi \in C(\partial\Omega)$. In other words, $\sigma = \mu - \nu$
  defines a positive measure with support on $\partial\Omega$. By
  Proposition~\ref{prop:equal_mass}, $\sigma(\partial\Omega) = 0$, so $\mu =
  \nu$.
\end{proof}

\begin{remark}
    The same is true for $\lepshm$, even if $\Omega$ is not B-regular, if we
    define the ordering via the cone $\mathcal{F} = \upsc(\bar\Omega) \cap
    \plsh(\Omega)$. Every continuous function on $\partial\Omega$ extends to
    an element in $\mathcal{F}$.

    We also note that if $\Omega$ is hyperconvex, but not B-regular, the
    conclusion in Proposition~\ref{prop:boundary_measures} does not hold. In
    that case, there is a point $p \in \partial\Omega$ and a Jensen measure $\mu
    \in J_p^c$ with $\supp \mu \subset \partial\Omega$, $\mu \neq \delta_p$,
    see~\cite{Wikstrom} for details. Then $\mu \lepshc \delta_p$.
\end{remark}

\section{Hulls and positive currents}\label{sec:hulls}

In a seminal paper by Duval and Sibony~\cite{Duval:95}, it is shown that
polynomial hulls can be described by the support of positive currents of
bidimension (1,1). In fact, several of their results may be localized to study
hulls with respect to other classes of plurisubharmonic functions. (Recall that
the polynomial hull is the same as the hull with respect to $\plsh(\Cn)$.)

To be more precise, let $\Omega$ be a bounded domain in $\Cn$ and let $\E' =
\E'(\Omega)$ be the space of distributions with compact support in $\Omega$. We
will also need various notions of plurisubharmonic hulls.

\begin{definition}\label{def:hull} Let $\Omega \subset \Cn$, and let $K \relcomp
  \Omega$ be compact. We define the relative plurisubharmonic hull of $K$ in
  $\Omega$ by
  \begin{equation}\label{eq:defhull}
    \hull(K,\Omega) = \{ z \in \Omega \suchthat u(z) \le \sup_{x \in K} u(x)
    \text{ for every $u \in \plsh(\Omega)$} \}.
  \end{equation}
  In a similar way we define the smooth relative plurisubharmonic hull of $K$ in
  $\Omega$, denoted by $\hull^\infty(K,\Omega)$ where we only require that the
  inequality in~\eqref{eq:defhull} holds for functions in $\plsh(\Omega) \cap
  C^\infty(\Omega)$.
\end{definition}

\begin{remark}
  Note that $\hat K = \hull(K,\Cn) = \hull^\infty(K,\Cn)$. (Here, $\hat K$
  denotes the polynomial hull of $K$.) Furthermore, if $\Omega$ is pseudoconvex,
  then any plurisubharmonic function on $\Omega$ can be written as the limit of
  a decreasing sequence of smooth plurisubharmonic functions, and from this it
  follows that $\hull(K,\Omega) = \hull^\infty(K,\Omega)$. Also, if $\Omega$ is
  a neighbourhood of $\hat K$, then it can be checked that $\hat K =
  \hull(K,\Omega)$.
\end{remark}

\begin{proposition}\label{prop:suppT} Let $\Omega$ be a domain in $\Cn$ and
  assume that $T$ is a positive current of bidimension (1,1) with $\supp T
  \relcomp \Omega$. Assume that $\ddc T$ is negative on $\Omega \setminus K$ for
  some compact set $K \relcomp \Omega$. Then $\supp T \subset
  \hull^\infty(K,\Omega)$.
\end{proposition}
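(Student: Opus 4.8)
The plan is to show that any point $z_0 \in \Omega \setminus \hull^\infty(K,\Omega)$ lies outside $\supp T$, by pairing $T$ against a suitable smooth plurisubharmonic function and exploiting the sign of $\ddc T$. Fix such a $z_0$. By definition of the smooth hull, there exists $u \in \plsh(\Omega) \cap C^\infty(\Omega)$ with $u(z_0) > \sup_{x \in K} u(x)$; after adding a constant we may assume $\sup_K u < 0 < u(z_0)$. The key object is the pairing $\langle T, \ddc \varphi\rangle$ for test functions $\varphi$ built from $u$. Since $T$ is a positive current of bidimension $(1,1)$ and $\ddc \varphi$ is a $(1,1)$-form, this pairing makes sense, and by definition of $\ddc T$ we have $\langle T, \ddc \varphi\rangle = \langle \ddc T, \varphi\rangle$.

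First I would handle the sign bookkeeping. Because $T$ is positive and $\varphi$ is a smooth plurisubharmonic function near $\supp T$, we get $\langle T, \ddc \varphi\rangle \ge 0$; on the other hand, if $\varphi$ is chosen so that its support meets $\supp T$ only in the region $\Omega \setminus K$ where $\ddc T \le 0$, and $\varphi \ge 0$ there, then $\langle \ddc T, \varphi\rangle \le 0$. Forcing both inequalities gives $\langle T, \ddc \varphi\rangle = 0$, which (by positivity of $T$ and strict plurisubharmonicity, arranged by adding a small multiple of $|z|^2$ to $u$) forces $\varphi$, hence $u$, to be constant on a neighbourhood of $\supp T$ — contradicting $\sup_K u < u(z_0)$ once we know $z_0 \in \supp T$. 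To make this rigorous I would introduce a convex increasing function $\chi \colon \R \to \R$ with $\chi \equiv 0$ on $(-\infty, 0]$ and $\chi > 0$, $\chi'' > 0$ on $(0,\infty)$, and set $\varphi = \chi \circ u$. Then $\varphi$ is smooth, plurisubharmonic on $\Omega$, vanishes on the set $\{u \le 0\} \supset K$, and is strictly positive at $z_0$; moreover $\varphi$ has compact support issues only at $\d\Omega$, which is harmless since $\supp T \relcomp \Omega$ (one can multiply by a cutoff equal to $1$ near $\supp T$, or restrict attention to a slightly smaller domain).

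With $\varphi = \chi \circ u$ in hand, the computation is: $0 \le \langle T, \ddc\varphi\rangle = \langle \ddc T, \varphi\rangle$. Now $\varphi$ is supported in $\{u > 0\}$, which is contained in $\Omega \setminus K$; there $\ddc T \le 0$ and $\varphi \ge 0$, so $\langle \ddc T, \varphi\rangle \le 0$. Hence $\langle T, \ddc\varphi\rangle = 0$. Writing $\ddc\varphi = \chi'(u)\,\ddc u + \chi''(u)\,\extd u \wedge \extd^c u$, both terms are positive $(1,1)$-forms on $\{u>0\}$, so pairing with the positive current $T$ gives two nonnegative contributions summing to zero, whence each vanishes; in particular $\langle T, \chi''(u)\,\extd u \wedge \extd^c u\rangle = 0$. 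Since $\chi'' > 0$ on $\{u > 0\}$, this says $T$ puts no mass on the set where $\extd u \ne 0$ within $\{u>0\}$. Replacing $u$ by $u + \eps|z|^2$ (still plurisubharmonic, still $< 0$ on a slightly shrunk $K$-neighbourhood for $\eps$ small, still $> 0$ at $z_0$) makes $\extd u$ nonvanishing and the form $\extd u \wedge \extd^c u$ strictly positive on $\{u>0\}$, forcing $T = 0$ on the open set $\{u > 0\}$, which contains $z_0$. Therefore $z_0 \notin \supp T$.

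The main obstacle I anticipate is the careful justification of the integration-by-parts identity $\langle T, \ddc\varphi\rangle = \langle \ddc T, \varphi\rangle$ together with the cutoff near $\d\Omega$: one must ensure $\varphi$ (or a product $\psi\varphi$ with $\psi$ a cutoff that is $1$ on a neighbourhood of $\supp T$ and supported in $\Omega$) is a legitimate test form and that introducing $\psi$ does not spoil the sign of the relevant terms — since $\psi \equiv 1$ near $\supp T$, all derivatives of $\psi$ vanish there and the pairings are unaffected. A secondary subtlety is that $\E'$-currents and their $\ddc$ are only distributions, so "$\ddc T$ negative on $\Omega\setminus K$" must be read as: $\langle \ddc T, \omega\rangle \le 0$ for every positive test $(1,1)$-form $\omega$ supported in $\Omega \setminus K$; the argument above only ever tests against such forms, so this is exactly what is needed.
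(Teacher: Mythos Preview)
Your overall strategy coincides with the paper's: build a smooth nonnegative plurisubharmonic function that vanishes near $K$ and is positive at the chosen point, then derive a contradiction from the incompatible signs in $\langle T,\ddc\varphi\rangle = \langle \ddc T,\varphi\rangle$. The paper produces this function by mollifying $\max\{\phi,0\}$; you produce it as $\chi\circ u$ for a smooth convex $\chi$ vanishing on $(-\infty,0]$. These are interchangeable constructions, and your handling of the cutoff and of the meaning of ``$\ddc T\le 0$ on $\Omega\setminus K$'' is fine.

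There is, however, a genuine error in your final step. The form $\extd u\wedge\extd^c u$ is, up to a constant, $i\,\partial u\wedge\bar\partial u$, which has rank at most one as a Hermitian $(1,1)$-form. For $n\ge 2$ it is therefore \emph{never} strictly positive, no matter whether $\extd u$ vanishes; so from $\langle T,\chi''(u)\,\extd u\wedge\extd^c u\rangle=0$ you cannot conclude that $T=0$ on $\{u>0\}$. The repair is immediate and already implicit in what you wrote: use the \emph{other} term of your decomposition. After replacing $u$ by $u+\eps|z|^2$, the form $\ddc u$ is strictly positive on all of $\Omega$, and $\chi'(u)>0$ on $\{u>0\}$, so $\chi'(u)\,\ddc u$ is a strictly positive $(1,1)$-form on the open set $\{u>0\}$. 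Then $\langle T,\chi'(u)\,\ddc u\rangle=0$ forces $T=0$ there, and in particular $z_0\notin\supp T$. Equivalently, and closer to the paper's one-line conclusion, you can skip the decomposition entirely: once $u$ is strictly plurisubharmonic, $\ddc\varphi\ge \chi'(u)\,\ddc u$ is strictly positive near $z_0$, so if $z_0\in\supp T$ then $\langle T,\ddc\varphi\rangle>0$, contradicting $\langle\ddc T,\varphi\rangle\le 0$.
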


\begin{proof}
  Assume that $x \in \supp T \setminus \hull^\infty(K,\Omega)$. Then by
  definition of the hull, there is a $\phi \in \plsh(\Omega) \cap
  C^\infty(\Omega)$ such that $\phi(x) > 0$ and $\phi \le 0$ on a neighbourhood
  of $K$. Let $\psi = \max \{ \phi, 0 \}$ and define $u = \psi * \chi_\eps$
  where $\chi_\eps$ is a standard radial positive smothing kernel with support
  in $\mathbb{B}(0;\eps)$. Then $u$ is plurisubharmonic and smooth on
  $\Omega_\eps = \{ z \in \Omega : \dist(z,\partial\Omega) < \eps \}$. Choose
  $\eps$ so small that $\supp T \relcomp \Omega_\eps$ and that $u = 0$ on $K$.
  Then, $u \ge 0$, $u$ vanishes near $K$ and is strictly plurisubharmonic on a
  neighbourhood of $x$.  Hence
  \begin{equation*}
    0 < \langle T, \ddc \phi \rangle = \langle \ddc T, \phi \rangle \le 0,
  \end{equation*}
  which is a contradiction.
\end{proof}

\begin{proposition}\label{prop:T} Let $\Omega$ be a pseudoconvex domain in $\Cn$
  and let $\nu \in \E'(\Omega)$.  Assume that $\langle \nu,\phi \rangle \ge 0$
  for every $\phi \in \plsh(\Omega) \cap C^\infty(\Omega)$. Then there exists a
  positive current $T$ of bidimension (1,1) such that
  \begin{equation}\label{eq:ddc}
    \ddc T = \nu.
  \end{equation}
\end{proposition}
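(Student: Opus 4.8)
\textbf{Proof proposal for Proposition~\ref{prop:T}.}

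The plan is to obtain $T$ by a Hahn--Banach/duality argument on currents, exactly paralleling the way Jensen measures are produced in the proof of Theorem~\ref{edwards}, but now in the space of $(1,1)$-forms rather than functions. Write $\D^{1,1}(\Omega)$ for the space of smooth compactly supported $(1,1)$-forms on $\Omega$; a positive current $T$ of bidimension $(1,1)$ is a linear functional on $\D^{1,1}(\Omega)$ that is nonnegative on strongly positive forms, and $\ddc T = \nu$ means $\langle T, \ddc\phi\rangle = \langle \nu,\phi\rangle$ for all $\phi \in C^\infty_c(\Omega)$. So the object we must construct is a functional on $\D^{1,1}(\Omega)$ that is (i) positive on positive forms and (ii) agrees with $\nu$ after precomposition with $\ddc$.

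First I would set up the right cone and sublinear functional. Let $\mathcal P \subset \D^{1,1}(\Omega)$ be the cone of (strongly) positive test forms, and consider the linear subspace $L = \{\ddc\phi : \phi \in C^\infty_c(\Omega)\} + \spn(\mathcal P - \mathcal P)$; actually it is cleaner to work on all of $\D^{1,1}(\Omega)$ and define $p(\alpha) = \inf\{\langle\nu,\phi\rangle : \phi \in C^\infty_c(\Omega),\ \ddc\phi \ge \alpha\}$, i.e. the cheapest way to dominate $\alpha$ from above by $\ddc$ of a test function. One checks $p$ is sublinear ($\R_{>0}$-homogeneous and subadditive), and that it is finite: finiteness on the positive side uses pseudoconvexity of $\Omega$, since there exists a smooth strictly plurisubharmonic exhaustion, and truncating/cutting it off produces, for any given $\alpha$ supported in a fixed compact $K' \relcomp \Omega$, a $\phi \in C^\infty_c(\Omega)$ with $\ddc\phi \ge \alpha$ on $\supp\alpha$ (a bump of a strictly psh function dominates any fixed form on a compact set after scaling, and is psh hence $\ddc\phi\ge 0$ elsewhere). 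The hypothesis $\langle\nu,\phi\rangle\ge 0$ for $\phi$ psh and smooth guarantees $p(\alpha)\ge 0$ when $\alpha\ge0$, and more generally bounds $p$ below, so $p$ does not take the value $-\infty$.

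Next, on the one-dimensional subspace spanned by any fixed $\ddc\phi_0$ define $A(\ddc\phi_0) = \langle\nu,\phi_0\rangle$ — this is well defined because if $\ddc\phi_0 = \ddc\phi_1$ then $\phi_0 - \phi_1$ is pluriharmonic with compact support, hence $\equiv 0$, so $\langle\nu,\phi_0\rangle = \langle\nu,\phi_1\rangle$ — and check $A \le p$ there. Hahn--Banach extends $A$ to a linear $T$ on all of $\D^{1,1}(\Omega)$ with $T \le p$. Then $T(\alpha)\le p(\alpha)\le 0$ for $\alpha \le 0$ forces $T \ge 0$ on positive forms, i.e. $T$ is a positive current; $T$ has support in $\Omega$ automatically since it is a functional on forms compactly supported in $\Omega$, and one argues compactness of $\supp T$ (or, if one only needs $\supp T \relcomp \Omega$, this follows once combined with Proposition~\ref{prop:suppT}, or directly from $p$ vanishing off a fixed compact). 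Finally $T(\ddc\phi) \le p(\ddc\phi) \le \langle\nu,\phi\rangle$ for all $\phi \in C^\infty_c(\Omega)$, and applying this to $-\phi$ as well gives equality, so $\ddc T = \nu$ and $T$ has bidimension $(1,1)$ because it is defined on $(1,1)$-forms.

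The main obstacle I expect is the finiteness and lower-boundedness of the sublinear functional $p$, i.e. verifying that for every test $(1,1)$-form $\alpha$ there really is a smooth compactly supported $\phi$ with $\ddc\phi \ge \alpha$: this is where pseudoconvexity enters essentially (through the existence of a smooth strictly plurisubharmonic exhaustion), and one has to be careful that cutting such an exhaustion off to compact support does not destroy the inequality on $\supp\alpha$ while keeping $p(\alpha) > -\infty$ via the positivity hypothesis on $\nu$. The remaining steps — well-definedness of $A$ via vanishing of compactly supported pluriharmonic functions, Hahn--Banach, and reading off positivity and $\ddc T = \nu$ — are routine.
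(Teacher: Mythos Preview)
Your overall strategy---Hahn--Banach \emph{extension} of the functional $\ddc\phi \mapsto \langle\nu,\phi\rangle$, dominated by the sublinear $p(\alpha)=\inf\{\langle\nu,\phi\rangle:\phi\in C^\infty_c,\ \ddc\phi\ge\alpha\}$---does work, but it is not the paper's route, and the very step you flag as ``the main obstacle'' is handled incorrectly.

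\textbf{The error.} You assert that for every test form $\alpha$ one can cut off a strictly plurisubharmonic exhaustion to produce $\phi\in C^\infty_c(\Omega)$ with $\ddc\phi\ge\alpha$, arguing that the cutoff ``is psh hence $\ddc\phi\ge 0$ elsewhere''. This is false: a nonzero compactly supported function is never plurisubharmonic, so any cutoff introduces a region where $\ddc\phi$ is not $\ge 0$. Concretely, integrating against $\omega^{n-1}$ gives $\int_\Omega\ddc\phi\wedge\omega^{n-1}=0$ for $\phi\in C^\infty_c$, so if $\ddc\phi\ge\alpha$ on all of $\Omega$ one needs $\int\operatorname{tr}\alpha\le 0$; in particular for any nonzero $\alpha\ge 0$ the set $\{\phi:\ddc\phi\ge\alpha\}$ is \emph{empty} and $p(\alpha)=+\infty$. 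So your finiteness claim fails precisely where you try to use pseudoconvexity.

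\textbf{Why the argument still survives.} Hahn--Banach is happy with $p:\D^{1,1}(\Omega)\to(-\infty,+\infty]$; what you actually need is $p>-\infty$ and $A\le p$ on the starting subspace. Both hold: if $\ddc\phi\ge\alpha$ then $\phi+C|z|^2$ is plurisubharmonic on $\Omega$ for $C$ depending only on $\alpha$ (since $\ddc\phi\ge 0$ off $\supp\alpha$), so the hypothesis on $\nu$ gives $\langle\nu,\phi\rangle\ge -C\langle\nu,|z|^2\rangle$; and on $\ddc(C^\infty_c)$ one has $p(\ddc\phi_0)=\langle\nu,\phi_0\rangle$ exactly, because any competitor $\phi$ with $\ddc\phi\ge\ddc\phi_0$ has $\phi-\phi_0$ psh and compactly supported, hence zero. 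The rest of your outline (positivity of $T$ from $p(\alpha)\le 0$ for $\alpha\le 0$, and $\ddc T=\nu$ from the two-sided inequality) is fine. Note, incidentally, that this repair uses $|z|^2$, not a psh exhaustion, so pseudoconvexity plays no role at this point.

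\textbf{Comparison with the paper.} The paper argues by Hahn--Banach \emph{separation} in $\E'(\Omega)$: it fixes $U$ with $\hull(\supp\nu,\Omega)\relcomp U\relcomp\Omega$, shows the cone $\Gamma=\{\ddc T:T\ge 0,\ \supp T\subset\bar U\}$ is weakly closed, and separates $\nu$ from $\Gamma$ to reach a contradiction via a smooth function that is psh on $U$ and can be extended to $\plsh(\Omega)\cap C^\infty(\Omega)$ using the hull condition. This is where the paper spends the pseudoconvexity hypothesis, and it buys something your argument does not: the $T$ produced has $\supp T\subset\bar U\relcomp\Omega$, which is exactly what is used in the theorem immediately following. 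Your extension argument yields a positive current $T$ on $\Omega$ with $\ddc T=\nu$, but your suggested routes to compact support do not work as stated (Proposition~\ref{prop:suppT} \emph{assumes} $\supp T\relcomp\Omega$, so it cannot bootstrap; and ``$p$ vanishing off a fixed compact'' is not meaningful for a functional on forms).
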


\begin{proof}
  Take an open set $U$ such that $\hull(\supp \nu, \Omega) \relcomp U
  \relcomp \Omega$.  Let $\Gamma = \{ \ddc T : T \ge 0 \text{ of
  bidimension (1,1)}, \supp T \subset \bar U \}$. Note that $\Gamma$
  is a closed cone in $\E'(\Omega)$ (in the weak topology).

  Assume that~\eqref{eq:ddc} has no solution $T$ with $\supp T \subset \bar U$.
  Then by Hahn--Banach's Theorem, $\nu$ is separated from the closed cone
  $\Gamma$, and there is a $\phi \in C^\infty(\Omega)$ and a constant $c$ such
  that
  \begin{equation}\label{eq:ineqc}
    \langle \nu, \phi \rangle < c \le \langle T, \ddc \phi \rangle
  \end{equation}
  for every $T \ge 0$ of bidimension (1,1) with support in $\bar U$.
  Since~\eqref{eq:ineqc} holds for $T$ replaced by $r T$ for every $r > 0$, we
  first note that $c \le 0$ (by taking $r$ sufficiently small), and secondly
  that $\langle T, \ddc \phi \rangle \ge 0$ for every $T \ge 0$ (by taking $r$
  sufficiently large). This implies that $\ddc \phi \ge 0$ on $U$, i.e.\ that
  $\phi$ is plurisubharmonic on $U$.  By modifying $\phi$ outside $\supp \nu$
  (using the assumption that $\hull(\supp \nu, \Omega) \relcomp U$) we can
  ensure that $\phi \in \plsh(\Omega) \cap C^\infty(\Omega)$, but then $\langle
  \nu, \phi \rangle < 0$ which contradicts the assumption on $\nu$.
\end{proof}

\begin{theorem}
    Let $\mu_1$ and $\mu_2$ be compactly supported in $\Omega$.  If $\mu_1
    \lepsh \mu_2$, and $\int \varphi\,d\mu_1 = \int \varphi\,d\mu_2$ for some
    strictly plurisubharmonic function $\varphi$, then $\mu_1 = \mu_2$.
\end{theorem}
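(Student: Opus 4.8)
The plan is to reduce the statement to an application of the Duval–Sibony machinery developed in Section~\ref{sec:hulls}, namely Proposition~\ref{prop:T} and Proposition~\ref{prop:suppT}. First I would set $\nu = \mu_1 - \mu_2 \in \E'(\Omega)$; this is a well-defined compactly supported distribution since both $\mu_i$ are compactly supported in $\Omega$. The hypothesis $\mu_1 \lepsh \mu_2$ (Bengtson's ordering) together with Bengtson~\cite[Prop.~3.2c]{Bengtson} — as already used earlier in the excerpt — gives $\int u\,d\mu_1 \ge \int u\,d\mu_2$ for every negative plurisubharmonic function $u$, and since the two measures have compact support in $\Omega$ one can subtract constants freely, so in fact $\langle \nu, \phi\rangle = \int \phi\,d\mu_1 - \int \phi\,d\mu_2 \ge 0$ for every $\phi \in \plsh(\Omega)$, in particular for every $\phi \in \plsh(\Omega) \cap C^\infty(\Omega)$. (If $\Omega$ is not pseudoconvex one replaces $\Omega$ by a pseudoconvex neighbourhood of a large compact set containing the supports; the orderings are insensitive to this.) By Proposition~\ref{prop:T} there is a positive current $T$ of bidimension $(1,1)$, compactly supported in $\Omega$, with $\ddc T = \nu$.

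Next I would exploit the equality hypothesis on the strictly plurisubharmonic $\varphi$. We have $\langle \ddc T, \varphi \rangle = \langle \nu, \varphi \rangle = \int \varphi\,d\mu_1 - \int \varphi\,d\mu_2 = 0$. On the other hand $\langle \ddc T, \varphi\rangle = \langle T, \ddc \varphi\rangle$, and $\ddc\varphi$ is a strictly positive $(1,1)$-form while $T \ge 0$; so $\langle T, \ddc\varphi\rangle = 0$ forces $T = 0$ on the region where $\varphi$ is strictly plurisubharmonic, i.e.\ everywhere relevant. More carefully: the coupling $\langle T, \ddc\varphi \rangle$ is an integral of a positive measure (the trace measure of $T$ weighted against the eigenvalues of $\ddc\varphi$, which are bounded below by a positive constant on any compact set containing $\supp T$), so it vanishes only if the trace measure of $T$ vanishes, i.e.\ $T = 0$. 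Hence $\nu = \ddc T = 0$, which is exactly $\mu_1 = \mu_2$.

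The main obstacle I anticipate is making the step ``$\langle T, \ddc\varphi\rangle = 0$ and $T\ge 0$ imply $T = 0$'' fully rigorous at the level of currents: one needs that pairing a positive $(1,1)$-current against a strictly positive smooth $(1,1)$-form controls the mass (trace) of the current from below on compacta. This is standard — it follows from the fact that for a positive current $T$ of bidimension $(1,1)$ and a Hermitian form $\omega$ with $\omega \ge c\,\beta$ on a neighbourhood of $\supp T$ (where $\beta$ is the standard Kähler form), one has $\langle T, \omega \rangle \ge c\,\|T\|$ where $\|T\|$ is the mass — but I would state it as a small lemma or cite a standard reference (e.g.\ Demailly) rather than reprove it. A secondary, more cosmetic point is the passage from $\mu_i$ being merely positive Radon measures to $\nu \in \E'(\Omega)$ and the freedom to add constants; this is immediate from compact support in $\Omega$ together with Proposition~\ref{prop:equal_mass} (equal total mass), which guarantees $\langle \nu, 1\rangle = 0$ so that the sign condition $\langle \nu, \phi\rangle \ge 0$ genuinely extends from negative to upper-bounded $\phi$. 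Once these two points are in place the argument is short.
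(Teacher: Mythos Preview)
Your proposal is essentially identical to the paper's proof: define $\nu = \mu_1 - \mu_2$, observe that $\langle \nu,\phi\rangle \ge 0$ for smooth plurisubharmonic $\phi$, invoke Proposition~\ref{prop:T} to write $\nu = \ddc T$ with $T \ge 0$ compactly supported, and then use $\langle T,\ddc\varphi\rangle = \langle \nu,\varphi\rangle = 0$ together with strict positivity of $\ddc\varphi$ to force $T=0$. The only minor slip is your appeal to Proposition~\ref{prop:equal_mass} for equal total mass---that proposition is stated for $\lepshc$ and $\lepshm$, not for $\lepsh$---but the paper's own proof does not justify the passage from negative to arbitrary smooth plurisubharmonic test functions either, so your write-up is in fact more careful than the original on this point.
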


    \begin{proof}
    Define a distribution $\nu$ by
    \[
        \langle \nu, \phi \rangle = \int \phi\,d\mu_1 - \int \phi\,d\mu_2
    \]
    for $\phi \in C_0^\infty$. Then $\nu$ is a compactly supported distribution
    that is non-negative on $\plsh(\Omega) \cap C^\infty(\Omega)$. By
    Proposition~\ref{prop:T}, we can write $\nu = \ddc T$ for some positive $T$
    with compact support.

    By assumption $0 = \langle \nu, \varphi \rangle = \langle \ddc T, \varphi
    \rangle = \langle T, \ddc \varphi \rangle$. Since $\varphi$ is strictly
    plurisubharmonic, $\ddc \varphi > 0$ on $\Omega$. This implies that $T = 0$,
    since $T$ is positive.
\end{proof}

\section{Edwards' theorem for other functionals}

We have already seen that it is possible to weaken some of the hypotheses in
Edwards' original formulation of his duality result. In this section, we will
explore another variation. We begin by noting that any cone $\mathcal{F}$ of
real-valued functions on~$X$ induces an ordering on the measures supported on
$X$:

\begin{definition}
    Let $\mu$ and $\nu$ be positive Radon measures supported on $X$. We say that
    $\mu \leF \nu$ if
    \[
        \int u\,d\nu \le \int u\,d\mu
    \]
    for all $u \in \mathcal{F}$.
\end{definition}

Note that, if $\mathcal{F}$ contains all constant functions, then $\mu \leF \nu$
implies that~$\mu$ and~$\nu$ have the same total mass. This induced ordering
allows us to reinterpret the original formulation of Edwards. Specifically, the
operator
\[
    Ig = \inf\{ g\,d\nu \suchthat \nu \in J^\mathcal{F}_x \},
\]
taking the infimum over the Jensen measures in $J^\mathcal{F}_x$, may be viewed
as taking the infimum over all measures $\nu$ satisfying $\nu \leF \delta_x$. On
the other hand, in the definition of
\[
    Sg = \sup\{ u(x) \suchthat u \in \mathcal{F}, u \le g \},
\]
$u(x)$ is the point evaluation $u \mapsto \int u\,d\delta_x$. This suggests
replacing point evaluation by another positive functional, i.e.~by integration
against any positive Radon measure~$\mu$ supported on $X$, yielding operators
\begin{align*}
   I_\mu g &= \inf \Big\{ \int g\,d\nu \suchthat \nu \leF \mu \Big\},
\shortintertext{and}
  S_\mu g &= \sup \Big\{ \int u\,d\mu \suchthat u \in \mathcal{F}, u\le g\Big\}
\end{align*}
for any bounded Borel function $g$ on $X$. Note that, in contrast to the usual
formulation of Edwards' machinery, this setup will not give us functions $S_\mu$
and $I_\mu$ on $X$ unless we make a choice of functionals $x \mapsto \mu_x$. At
the time of writing, we are not aware of any natural such choice other than $x
\mapsto \delta_x$.

Edwards' theorem continues to hold \emph{mutatis mutandis} for these operators
as well.

\begin{theorem}[Edwards' theorem for positive functionals]\label{edwards2}
    Let $X$ be a compact metric space, $H \subset C(X)$ be a subspace total
    outside its zeros, and suppose that $\mathcal{F}$ is $H$-admissible. Let
    $\mu$ be a positive Radon measure supported on $X$ and assume that there
    exists $C_\mu>0$ such that for each element $h \in H$, there exists $u\in
    \mathcal{F}$ with $u\leq h$ and
    \[
        \int u\,d\mu \geq -C_\mu |h|_\infty.
    \]
    Then for each non-decreasing sequence $h_i \in H$, we have
    $S_\mu(h_\infty)=I_\mu(h_\infty)$.
\end{theorem}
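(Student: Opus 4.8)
The plan is to mimic the proof of Theorem~\ref{edwards} almost verbatim, replacing each point evaluation $v \mapsto v(x)$ by the positive functional $v \mapsto \int v\,d\mu$. First I would treat the base case $h_\infty = h \in H$. By hypothesis there is $u \in \mathcal{F}$ with $u \le h$ and $\int u\,d\mu \ge -C_\mu|h|_\infty$, so
\[
    \infty > \int h\,d\mu \ge S_\mu(h) \ge \int u\,d\mu > -\infty,
\]
which shows $-S_\mu$ is a well-defined sublinear functional on $H$ (sublinearity follows from the cone structure of $\mathcal{F}$, exactly as before). On $\spn\{h\}$ define the linear functional $-A(ch) = -cS_\mu(h)$; I need $-A \le -S_\mu$ on $\spn\{h\}$, which for $c \ge 0$ is equality and for $c < 0$ follows since $-S_\mu(ch) = -c S_\mu(-h) \le -c(-S_\mu(h))$ using subadditivity applied to $h$ and $-h$. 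Then Hahn--Banach extends $-A$ to $-A'$ on $H$ with $-A' \le -S_\mu$, and a second Hahn--Banach step (bounding by the continuous seminorm $\varphi \mapsto C_\mu|\varphi|_\infty$, which dominates $-S_\mu$ on $H$ because $-S_\mu(\varphi) \le C_\mu|\varphi|_\infty$ by the hypothesis) extends to a continuous functional $A''$ on $C(X)$ with $A''(\varphi) \le C_\mu|\varphi|_\infty$.

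Next, Riesz representation gives a signed measure $\mu_h = \mu_h^+ - \mu_h^-$ representing $A''$. Totality of $H$ outside its zeros forces $A''$ to be positive on $E(C_c(X \setminus Z_H))$: for $\varphi$ in that space, pick $h_i \searrow \varphi$ in $H$, note $A''(h_i) = -(-A'(h_i)) \ge -(-S_\mu(h_i))= S_\mu(h_i)$, and $S_\mu(h_i) \ge \int u_0 \,d\mu$ is bounded below where $u_0 \le h_1$; more directly, $\varphi \ge 0$ gives $h_i \ge 0$ eventually is false, so instead I argue $A''(\varphi) = \lim A''(h_i) \ge \lim S_\mu(h_i) \ge \lim \int \min\{u_0, \ldots\}$ — actually the clean route, as in the original, is: $A''$ restricted to $E(C_c(X\setminus Z_H))$ is positive because any such nonnegative $\varphi$ is a decreasing limit of elements of $H$ bounded below by something in $\mathcal F$, hence $A''(\varphi) \ge S_\mu(\varphi) \ge S_\mu(0) \ge \int u\,d\mu$ for the minorant of $0$; iterating with $\varphi/k$ shows $A''(\varphi)\ge 0$. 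Consequently $\mu_h^-$ is supported on $Z_H$, so $\int g\,d\mu_h^+ = \int g\,d\mu_h = A''(g) \ge S_\mu(g)$ for every $g \in H$, with equality at $g = h$. Since $\mathcal{F}$ is $H$-admissible, monotone convergence upgrades this to $\int u\,d\mu_h^+ \ge \int u\,d\mu$ for all $u \in \mathcal{F}$, i.e.\ $\mu_h^+ \leF \mu$; therefore $S_\mu(h) \ge I_\mu(h)$, and combined with the trivial inequality $S_\mu(h) \le I_\mu(h)$ (any $u \le h$ in $\mathcal F$ and any $\nu \leF \mu$ satisfy $\int u\,d\mu \le \int u\,d\nu \le \int h\,d\nu$) we get $S_\mu(h) = I_\mu(h)$.

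For the general non-decreasing sequence $h_i \nearrow h_\infty$, I associate to each $h_i$ the signed measure $\mu_{h_i} = \mu_{h_i}^+ - \mu_{h_i}^-$ from the base case, with total variation controlled by $C_\mu$ via $\int d\mu_{h_i} = A''_i(1) \in [-C_\mu, C_\mu]$; by Banach--Alaoglu extract a weak$^*$-convergent subsequence $\mu_{h_{i_n}} \to \mu_{h_\infty}$. Then for fixed $k$,
\begin{align*}
    I_\mu(h_\infty) &\ge S_\mu(h_\infty) \ge \lim_{n} S_\mu(h_{i_n}) = \lim_n I_\mu(h_{i_n}) = \lim_n \int h_{i_n}\,d\mu_{h_{i_n}}^+ \\
    &= \lim_n \int h_{i_n}\,d\mu_{h_{i_n}} \ge \lim_n \int h_k\,d\mu_{h_{i_n}} = \int h_k\,d\mu_{h_\infty} = \int h_k\,d\mu_{h_\infty}^+,
\end{align*}
where the inequality $S_\mu(h_\infty) \ge \lim_n S_\mu(h_{i_n})$ uses monotonicity of $S_\mu$, the step $\int h_{i_n}\,d\mu_{h_{i_n}} \ge \int h_k\,d\mu_{h_{i_n}}$ uses $h_{i_n} \ge h_k$ for $i_n \ge k$ together with positivity of $\mu_{h_{i_n}}^+$ and the fact that $\mu_{h_{i_n}}^-$ lives on $Z_H$ where $h_k$ need not vanish — so one actually replaces $h_k$ by $h_k^+$ restricted appropriately, or more simply notes $h_k \le h_{i_n}$ everywhere so $\int(h_{i_n}-h_k)\,d\mu_{h_{i_n}}^+ \ge 0$ while $\int(h_{i_n}-h_k)\,d\mu_{h_{i_n}}^-$ — hmm, this sign needs care and is the one genuinely delicate point. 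Letting $k \to \infty$ and invoking monotone convergence on $\int h_k\,d\mu_{h_\infty}^+$ yields $I_\mu(h_\infty) \le S_\mu(h_\infty)$, and the reverse inequality is again trivial, so $S_\mu(h_\infty) = I_\mu(h_\infty)$.

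The main obstacle I anticipate is precisely the bookkeeping around the negative parts $\mu_{h_i}^-$: in the original proof with point evaluations one has $h_i(x)$ appearing and $\mu_{h_i}^-$ supported on $Z_H \ni$ points where the $h$'s vanish, so the negative parts contribute nothing; here I must make sure that integrating $h_k$ (which vanishes on $Z_H$ since $h_k \in H \subset$ functions vanishing on $Z_H$) against $\mu_{h_{i_n}}$ genuinely equals integrating against $\mu_{h_{i_n}}^+$, and that the weak$^*$ limit $\mu_{h_\infty}$ inherits the property that its negative part sits on $Z_H$ (so that $\int h_k\,d\mu_{h_\infty} = \int h_k\,d\mu_{h_\infty}^+$). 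Both follow because $h_k \equiv 0$ on $Z_H$ and $Z_H$ is closed, so no real new difficulty arises beyond what Theorem~\ref{edwards} already handles — the whole argument is structurally identical, with $\int(\cdot)\,d\mu$ in place of $(\cdot)(x)$ throughout.
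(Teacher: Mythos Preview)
Your approach is exactly what the paper does: it gives no separate proof of Theorem~\ref{edwards2} at all, but simply states that Edwards' theorem ``continues to hold \emph{mutatis mutandis}'' for the operators $S_\mu$ and $I_\mu$, i.e.\ the proof of Theorem~\ref{edwards} goes through with $\int(\cdot)\,d\mu$ in place of $(\cdot)(x)$. Your observation that the bookkeeping with the negative parts $\mu_{h_i}^-$ is harmless because every $h_k \in H$ vanishes identically on $Z_H$ is the right resolution of the one point you flagged as delicate, so the proposal is correct and matches the paper's (implicit) argument.
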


\begin{corollary}\label{edwards:ordering}
    Let $X$ be a compact metric space, suppose that $\mathcal{F}$ contains all
    constant functions, and let $\mu$ be a positive Radon measure. If $g$ is
    lower semicontinuous on $X$, then $S_\mu g = I_\mu g$.
\end{corollary}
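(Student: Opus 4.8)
The plan is to derive Corollary~\ref{edwards:ordering} from Theorem~\ref{edwards2} by making the standard choices $X$, $H = C(X)$, and verifying the hypotheses. First I would take $H = C(X)$ itself; since $\mathcal{F}$ contains all constants, it is $C(X)$-admissible (this is the remark following the definition of $H$-admissibility), and $C(X)$ is trivially total outside its zeros because $Z_{C(X)} = \emptyset$ on a compact metric space, so $E$ is the identity and every $\varphi \in C(X)$ is already in $C(X)$ (take the constant sequence). Next I would produce the constant $C_\mu$: given $h \in H$, the function $u \equiv -|h|_\infty$ lies in $\mathcal{F}$ (constants are in $\mathcal{F}$), satisfies $u \le h$, and $\int u\,d\mu = -|h|_\infty \mu(X)$, so $C_\mu = \mu(X)$ works.

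With the hypotheses of Theorem~\ref{edwards2} in place, I get $S_\mu(h_\infty) = I_\mu(h_\infty)$ for every non-decreasing sequence $h_i \in C(X)$ with pointwise limit $h_\infty$. The remaining step is to pass from such increasing limits of continuous functions to an arbitrary lower semicontinuous $g$ on $X$. Since $X$ is a compact metric space, every lower semicontinuous $g \colon X \to (-\infty,\infty]$ is the pointwise increasing limit of a sequence $h_i \in C(X)$ (for real-valued $g$ one can take $h_i(x) = \inf_{y}\{g(y) + i\,\dist(x,y)\}$, the Moreau–Yosida / inf-convolution approximants, truncated below if necessary; for $g$ allowed to take the value $+\infty$ the same construction works after composing with $\min(\cdot, i)$, or one reduces to this case). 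Here one should be slightly careful about whether Corollary~\ref{edwards:ordering} intends $g$ real-valued and bounded (matching the ``bounded Borel function'' hypothesis under which $S_\mu, I_\mu$ were defined) or $(-\infty,\infty]$-valued as in the original Edwards theorem; in either reading the approximation is available, though in the unbounded case one also invokes monotone convergence to justify $\int h_i\,d\nu \nearrow \int g\,d\nu$.

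Finally, applying Theorem~\ref{edwards2} to this sequence gives $S_\mu g = S_\mu(h_\infty) = I_\mu(h_\infty) = I_\mu g$ directly, since $h_\infty = g$. Strictly speaking one should double-check that the $S_\mu$ and $I_\mu$ appearing in the conclusion of Theorem~\ref{edwards2} — which are defined for bounded Borel functions — are being evaluated at $h_\infty = g$ consistently with how they appear in the corollary; this is immediate once $g$ is identified with the limit of the $h_i$.

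I expect the only real obstacle to be the approximation step, i.e.\ confirming that the relevant class of $g$ (lower semicontinuous, possibly with a value restriction) is exactly the class of increasing pointwise limits of sequences in $C(X)$ on a compact metric space, and handling the $+\infty$ case via monotone convergence. Everything else — the choice $H = C(X)$, the admissibility, and the constant $C_\mu = \mu(X)$ — is routine bookkeeping once Theorem~\ref{edwards2} is granted.
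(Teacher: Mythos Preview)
Your proposal is correct and is exactly the derivation the paper has in mind: the corollary is stated without proof as an immediate consequence of Theorem~\ref{edwards2}, via the standard choices $H=C(X)$, $C_\mu=\mu(X)$, and the approximation of a lower semicontinuous function from below by continuous functions. Your caveat about whether $g$ is meant to be bounded (as in the definition of $S_\mu,I_\mu$) or $(-\infty,\infty]$-valued is a genuine ambiguity in the paper's own formulation, not a defect in your argument.
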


This version of Edwards' theorem can be used to understand minimal elements in
the plurisubharmonic orderings of measures.

\begin{definition}
    Let $\mu$ be a positive Radon measure on $\bar\Omega$. We say that $\mu$ is
    \emph{minimal} with respect to the ordering $\lepshc$ induced by
    $\plsh(\Omega) \cap C(\bar\Omega)$ if
    \[
        \nu \lepshc \mu \quad\to\quad \nu = \mu,
    \]
    and similarly for the ordering $\lepshm$. Similarly, we say that $\mu$ is
    \emph{maximal} if $\mu \lepshc \nu$ implies that $\mu = \nu$.
\end{definition}

Bengtson~\cite{Bengtson} studies the maximal elements in $\lepsh$ and by using
Corollary~\ref{edwards:ordering}, we can characterize the minimal elements in
our plurisubharmonic orderings of measures.

\begin{theorem}\label{thm:minimal}
    Let $\Omega \subset \Cn$ be a B-regular domain. A positive Radon
    measure~$\mu$ on~$\bar\Omega$ is minimal with respect to $\lepshc$ if and
    only if $\supp \mu \subset \partial\Omega$.
\end{theorem}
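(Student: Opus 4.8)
The plan is to prove the two implications separately, using Proposition~\ref{prop:boundary_measures} for the easy direction and Corollary~\ref{edwards:ordering} for the hard direction.

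First I would handle the implication ``$\supp\mu \subset \partial\Omega \implies \mu$ minimal''. Suppose $\nu \lepshc \mu$ with $\supp\mu \subset \partial\Omega$. I would first argue that $\nu$ is itself carried by $\partial\Omega$: for any $u \in \plsh(\Omega) \cap C(\bar\Omega)$ with $u \le 0$ on $\bar\Omega$ and $u < 0$ somewhere in $\Omega$ (such functions exist and can be taken to vanish on $\partial\Omega$ by B-regularity, e.g.\ exhaustion-type functions), the defining inequality $\int u\,d\nu \ge \int u\,d\mu = 0$ forces $\int u\,d\nu \ge 0$, while $u \le 0$ gives $\int u\,d\nu \le 0$; running this over a suitable family of such $u$ (or a single $u$ that is strictly negative on all of $\Omega$, available since $\Omega$ is B-regular hence hyperconvex) shows $\nu(\Omega) = 0$, so $\supp\nu \subset \partial\Omega$. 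Once both measures are supported on $\partial\Omega$, Proposition~\ref{prop:boundary_measures} applies directly and gives $\nu = \mu$.

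The main work is the converse: if $\supp\mu \not\subset \partial\Omega$, I must produce a $\nu \neq \mu$ with $\nu \lepshc \mu$. The idea is to ``sweep'' the part of $\mu$ living in the interior out toward the boundary. Concretely, pick a point $z_0 \in \supp\mu \cap \Omega$ and a small ball $B \relcomp \Omega$ around it with $\mu(B) > 0$; I want to replace $\mu|_B$ by a measure on $\bar\Omega$ that is $\lepshc$-larger but distinct. This is exactly where Corollary~\ref{edwards:ordering} enters: working with the cone $\mathcal{F} = \plsh(\Omega)\cap C(\bar\Omega)$, which does \emph{not} contain the nonzero constants after we restrict attention to the zero-on-boundary normalization — but does contain constants as stated — and with $g \equiv 0$ (or a suitable lower semicontinuous $g$ coming from a potential that is $-\infty$ on a target set), the corollary identifies $S_\mu(g) = I_\mu(g)$, and the measures realizing the infimum $I_\mu$ are precisely those $\nu \lepshc \mu$. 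The cleanest route is probably: take any Jensen measure $\lambda \in J_{z_0}^c$ supported on $\partial\Omega$ with $\lambda \neq \delta_{z_0}$ (these exist on B-regular domains only in degenerate situations, so more robustly take $\lambda$ supported on a sphere $\partial B$, which always exists — e.g.\ normalized surface measure, by the sub-mean-value property), and set $\nu = (\mu - \mu(B)\,\delta_{z_0}\text{-type piece})$ — more carefully, disintegrate and replace. Then $\int u\,d\nu \ge \int u\,d\mu$ for all $u \in \mathcal{F}$ by the Jensen inequality applied fiberwise, and $\nu \neq \mu$ because the interior mass has moved. Hence $\mu$ is not minimal.

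The step I expect to be the genuine obstacle is making the ``sweeping'' construction produce a measure $\nu$ that is both provably distinct from $\mu$ \emph{and} genuinely $\lepshc$-comparable when the interior point $z_0$ is close to $\partial\Omega$ or when $\mu$ has a complicated interior part — one must ensure the balls stay relatively compact and that the Jensen-measure replacement does not accidentally give back $\mu$. I would likely package this via a single application of Corollary~\ref{edwards:ordering}: choose $g$ lower semicontinuous with $g = 0$ on $\partial\Omega$ and $g$ slightly negative near $z_0$ in a way that no $u \in \mathcal{F}$ with $u \le g$ can have $\int u\,d\mu$ equal to $I_\mu(g)$ unless the competing measure $\nu$ charges the boundary more than $\mu$ does; the equality $S_\mu g = I_\mu g$ then forces the existence of such a $\nu \neq \mu$. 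Verifying that the resulting $\nu$ is a positive Radon measure of the right total mass is routine given Proposition~\ref{prop:equal_mass}.
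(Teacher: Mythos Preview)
Your treatment of the implication ``$\supp\mu \subset \partial\Omega \Rightarrow \mu$ minimal'' is exactly the paper's: use a continuous plurisubharmonic exhaustion $\phi$ (available since B-regular implies hyperconvex) to force $\supp\nu \subset \partial\Omega$, then invoke Proposition~\ref{prop:boundary_measures}.

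For the converse you take a genuinely different route. You argue by contrapositive and try to \emph{construct} a competitor $\nu \neq \mu$ by sweeping the interior mass of $\mu$ outward via Jensen measures (sphere averages). That idea can be made to work: if $z_0 \in \supp\mu \cap \Omega$, pick $r,\rho>0$ with $B(z_0,r)+B(0,\rho) \relcomp \Omega$ and $\mu(B(z_0,r))>0$, and define $\nu$ by replacing, for each $z \in B(z_0,r)$, the point mass $\delta_z$ by normalized surface measure on $\partial B(z,\rho)$; the sub-mean-value property gives $\nu \lepshc \mu$, and testing against the strictly plurisubharmonic $u(z)=|z|^2$ shows $\int u\,d\nu > \int u\,d\mu$, so $\nu \neq \mu$. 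Your write-up, however, does not quite say this: the expression ``$\mu - \mu(B)\delta_{z_0}$-type piece'' is not a well-defined measure, and your final paragraph attempting to route the construction through Corollary~\ref{edwards:ordering} with a clever $g$ never crystallizes into an argument.

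The paper's proof of this direction is shorter and non-constructive. It uses minimality \emph{directly}: if $\mu$ is minimal then $\mu$ is the only competitor in $I_\mu$, so $I_\mu g = \int g\,d\mu$ for every $g$. Corollary~\ref{edwards:ordering} then gives $\int (Pg)\,d\mu = S_\mu g = I_\mu g = \int g\,d\mu$, and since $Pg \le g$ this forces $Pg = g$ on $\supp\mu$ for every $g \in C(\bar\Omega)$. Choosing $g \le 0$ continuous with $g^{-1}(0) = \supp\mu$, the envelope $Pg$ is a nonpositive plurisubharmonic function vanishing on $\supp\mu$, and the maximum principle pushes $\supp\mu$ to $\partial\Omega$. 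So the paper exploits Edwards duality to avoid building $\nu$ at all; your approach is more hands-on and would also succeed once the balayage step is written precisely, but it does not really use Corollary~\ref{edwards:ordering}.
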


\begin{proof}
    Assume that $\mu$ is minimal with respect to $\lepshc$. Let $g \in
    C(\bar\Omega)$. By Corollary~\ref{edwards:ordering},
    \[
        S_\mu g =
        \sup \Big\{ \int u\,d\mu \suchthat u \in \plsh(\Omega) \cap
            C(\bar\Omega), u\le g\Big\}
        = I_\mu g = \int g\,d\mu.
    \]
    On the other hand,
    \[
    S_\mu g = \int (Pg)\,d\mu,
    \]
    where $Pg(z) = \sup \{ u(z) : u \in \plsh(\Omega) \cap C(\bar\Omega) \}$ is
    the Perron--Bremermann envelope of $g$. (Since $g \in C(\bar\Omega)$, it
    follows that $(Pg)^* = Pg$ is plurisubharmonic and continuous on $\Omega$,
    and from the characterization of Jensen measures in~\cite{Wikstrom}, we see
    that it makes no difference whether the supremum defining $Pg$ is taken over
    $\plsh(\Omega)$ or $\plsh(\Omega) \cap C(\bar\Omega)$.)

    Hence, for every $g \in C(\bar\Omega)$, $\int (Pg)\,d\mu = \int g\,d\mu$,
    but since $Pg \le g$, we can conclude that $Pg = g$ almost everywhere
    $[\mu]$. By continuity, $Pg = g$ on $\supp \mu$.

    In particular, we can choose a continuous function $g \le 0$, such that
    $g^{-1}(0) = \supp \mu$. Then $Pg \le 0$, $Pg = g$ on $\supp \mu$ and by the
    maximum principle, $\supp \mu \subset \partial\Omega$.

    Conversely, assume that $\supp \mu \subset \partial \Omega$ and $\nu \lepshc
    \mu$. Let $\phi \in \plsh(\Omega) \cap C(\bar\Omega)$ be a bounded
    plurisubharmonic exhaustion function for $\Omega$, i.e.
    $\phi|_{\partial\Omega} = 0$ and $\phi < 0$ on~$\Omega$.

    Then
    \[
        0 \ge \int \phi\,d\nu \ge \int \phi\,d\mu = 0
    \]
    In particular $\phi = 0$ almost everywhere~$[\nu]$, so $\supp \nu \subset
    \partial\Omega$. Proposition~\ref{prop:equal_mass} shows that $\nu = \mu$,
    i.e. $\mu$ is minimal.
\end{proof}

In Bengtson's ordering, there are no minimal measures by a definition similar to
above since $\frac{1}{2}\mu \lepsh \mu$ holds for any positive Radon measure $\mu$.
One way to circumvent this issue is to instead consider the weaker minimality
property
\[
    \nu \lepsh \mu \quad\to\quad \nu = k_\nu\mu
\]
for some $k_\nu \in [0,1]$. Even with this weaker definition, there are no
minimal elements.

\begin{theorem}\label{thm:minimal2}
    Let $\Omega \subset \Cn$ be a hyperconvex domain, and suppose that $\mu$ is
    positive Radon measure on $\Omega$ satisfying the minimality property above.
    Then $\mu(\Omega)=0$.
\end{theorem}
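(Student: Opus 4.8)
The plan is to argue by contradiction: assuming $\mu(\Omega)>0$, I will exhibit a positive Radon measure $\nu$ on $\Omega$ with $\nu\lepsh\mu$ but with $\nu\neq k\mu$ for every $k\in[0,1]$, contradicting the assumed minimality property. The guiding idea is that Bengtson's ordering only ``sees'' local mollification: replacing a compact chunk of $\mu$ by its convolution with a radial bump makes the integral of every plurisubharmonic function go up, so it produces a measure below $\mu$ in the ordering, yet it visibly enlarges the support.

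Concretely, since $\mu$ is Radon with $\mu(\Omega)>0$, inner regularity gives a compact set $K\relcomp\Omega$ with $c:=\mu(K)>0$. Put $d=\dist(K,\bd\Omega)>0$, fix $0<\eps<d$, and let $\chi_\eps$ be a standard non-negative radial mollifier with $\int\chi_\eps=1$ and $\{\chi_\eps>0\}=\B(0,\eps)$. Set
\[
    \rho:=(\mu|_K)*\chi_\eps,\qquad \nu:=(\mu-\mu|_K)+\rho .
\]
Since $\supp(\mu|_K)+\overline{\B(0,\eps)}\subset\Omega$, the measure $\rho$ is a positive Radon measure carried by $\Omega$ with $\rho(\Omega)=c$, so $\nu$ is a positive Radon measure on $\Omega$. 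To see that $\nu\lepsh\mu$, fix $u\in\E_0(\Omega)$, which is in particular a bounded plurisubharmonic function on $\Omega$. By Fubini and the radial symmetry of $\chi_\eps$,
\[
    \int u\,d\nu-\int u\,d\mu=\int u\,d\rho-\int u\,d(\mu|_K)=\int_K\big(u*\chi_\eps-u\big)\,d\mu\ \ge\ 0,
\]
the last inequality being the sub-mean value property of plurisubharmonic functions, valid on $K$ because $\eps<d$. Hence $\int u\,d\nu\ge\int u\,d\mu$ for all $u\in\E_0(\Omega)$, i.e.\ $\nu\lepsh\mu$.

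Now the minimality property forces $\nu=k_\nu\mu$ for some $k_\nu\in[0,1]$. Then $\mu-\nu=(1-k_\nu)\mu$ is a positive measure, while on the other hand $\mu-\nu=\mu|_K-\rho$; hence $\rho\le\mu|_K$ as measures. Since $\rho(\Omega)=\mu|_K(\Omega)=c<\infty$, this upgrades to $\rho=\mu|_K$. But $\supp\rho=\overline{\supp(\mu|_K)+\B(0,\eps)}$ strictly contains $\supp(\mu|_K)$ (for instance, take a point of maximal norm in $\supp(\mu|_K)$ and move it slightly outward), so $\supp\rho\not\subset\supp(\mu|_K)$, contradicting $\rho=\mu|_K$. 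Therefore $\mu(\Omega)=0$.

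I expect the main obstacle to be precisely the last step — ruling out that $\nu$ is a scalar multiple of $\mu$ — which dictates how the construction must be tuned: alter $\mu$ only on the compact piece $K$, preserve the total mass of that piece (so that $\rho\le\mu|_K$ improves to $\rho=\mu|_K$), and genuinely enlarge its support (so that the equality $\rho=\mu|_K$ is impossible). Once this bookkeeping is arranged, the inequality $\nu\lepsh\mu$ itself is routine, being nothing more than the standard fact that convolving a plurisubharmonic function with a radial mollifier does not decrease it.
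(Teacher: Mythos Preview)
Your proof is correct and takes a genuinely different route from the paper. The paper applies the Edwards-type duality developed in Section~5: for a continuous $g\le 0$ vanishing on $\partial\Omega$ and on a precompact open set $A\subset\Omega$ (and equal to $-1$ on a disjoint precompact $B$), Theorem~\ref{edwards2} combined with the continuity result of Theorem~\ref{thm:edwhypcon2} gives $\int(Pg)\,d\mu=S_\mu g=I_\mu g=\int g\,d\mu$, so $Pg=g$ on $\supp\mu$; the maximum principle then forces $Pg<0$ on $A$, hence $\mu(A)=0$, and letting $A$ vary finishes. Your argument bypasses all the duality machinery by a direct construction: mollify a compact chunk of $\mu$ to produce $\nu\lepsh\mu$ with the same total mass but strictly larger support, and use the total-mass bookkeeping to rule out $\nu=k\mu$. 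This is considerably more elementary and in fact never uses hyperconvexity of $\Omega$ (nor anything specific to $\E_0$ beyond $u\le 0$), whereas the paper's approach serves as an illustration of the duality theorems that are its main point. One small correction: functions in $\E_0(\Omega)$ need not be bounded below (the pluricomplex Green function is a standard example), but your Fubini and sub-mean-value steps only use $u\le 0$, so the argument is unaffected.
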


\begin{proof}
    Pick two precompact, disjoint open sets $A, B \subset \Omega$ and let $g \in
    C(\bar \Omega)$ satisfy
    \[
        g\leq 0, \qquad g\big|_{A}=g\big|_{\bar \Omega}=0, \qquad g\big|_{B} = -1.
    \]
    By Theorem~\ref{thm:edwhypcon2}, $Pg=\sup\big\{u(x) \suchthat u\in
    \E_0(\Omega), u \leq g\big\}$ is plurisubharmonic and continuous since
    \begin{align*}
        Pg  &\geq \sup\big\{u(x) \suchthat u\in \E_0(\Omega) \cap C(\bar \Omega), u \leq g\big\} \\
            &= \sup\big\{u(x) \suchthat u\in \psh^0(\Omega) , u \leq g\big\} \\
            &\geq Pg.
    \end{align*}
    Applying Theorem~\ref{edwards2}, we have
    \[
        \int (Pg)\,d\mu=S_\mu g= I_\mu g =
        \inf_{k_\nu\in[0,1]}\Big\{k_\nu \int g\,d\mu\Big\} = \int g\,d\mu,
    \]
    and as in the proof of Theorem~\ref{thm:minimal}, it follows that $Pg = g$ on $\supp \mu$. By the maximum principle, $P(g)\big|_A < 0$, which implies that $\mu(A)=0$. Since $A$ was arbitrary, the conclusion of the theorem follows.
\end{proof}

\bibliographystyle{amsplain}

\end{document}